\documentclass[12pt]{amsart}
\usepackage{amssymb}
\usepackage[all]{xy}

\input xy
\xyoption{all}

\textheight 22cm \voffset -1cm \textwidth 16cm \hoffset -1.5cm

\newtheorem{lemma}{Lemma}[section]
\newtheorem{corollary}[lemma]{Corollary}
\newtheorem{theorem}[lemma]{Theorem}
\newtheorem{proposition}[lemma]{Proposition}

\theoremstyle{definition}
\newtheorem{remark}[lemma]{Remark}

\newtheorem{example}[lemma]{Example}
\newtheorem*{C1}{Conjecture 1 (C1)}
\newtheorem*{C2}{Conjecture 2 (C2)}
\newtheorem*{IC}{Isomorphism Conjecture for Graph Algebras (IC)}
\newtheorem*{GIC}{Generalized Isomorphism Conjecture (GIC)}
\newtheorem*{GSIC}{Generalized Strong Isomorphism Conjecture (GSIC)}

\newcommand\Cset{\mathbb {C}}
\newcommand\Zset{\mathbb {Z}}
\newcommand\Qrtot{Q^r_{\mathrm{tot}}}
\newcommand\Qltot{Q^l_{\mathrm{tot}}}
\newcommand\Qtot{Q^{\sigma}_{\mathrm{tot}}}
\newcommand\Qrmax{Q^r_{\mathrm{max}}}
\newcommand\Qlmax{Q^l_{\mathrm{max}}}
\newcommand\Qmax{Q^{\sigma}_{\mathrm{max}}}
\newcommand\Qrcl{Q^r_{\mathrm{cl}}}
\newcommand\Qlcl{Q^l_{\mathrm{cl}}}
\newcommand\ef{{\mathfrak F}}
\newcommand\dirlim{\mathop{\varinjlim}\limits}
\newcommand\f{{\mathcal F}}
\newcommand\te{{\mathcal T}}
\newcommand\homo{\mathrm{Hom}}

\begin{document}
\title{Noetherian Leavitt path algebras and their regular algebras}

\author{Gonzalo Aranda Pino}
\address{Departamento de \'{A}lgebra, Geometr\'{\i}a y Topolog\'{\i}a\\
Universidad de M\'{a}laga\\
29071, M\'{a}laga, Spain}
\email{g.aranda@uma.es}

\author{Lia Va\v s}
\address{Department of Mathematics, Physics and Statistics\\
University of the Sciences in Philadelphia\\
Philadelphia, PA 19104, USA}
\email{l.vas@usp.edu}

\thanks{The first author was partially supported by the Spanish MEC and Fondos FEDER through project MTM2007-60333, and by the Junta de Andaluc\'{\i}a and Fondos FEDER, jointly, through projects FQM-336 and FQM-2467. Part of this work was carried out during a visit of the second author to the University of M\'alaga, partially funded by a ``Grant for foreign visiting professors'' within the III Research Framework Program of the University of M\'alaga. The second author thanks this host institution for its warm hospitality and support.}

\subjclass[2000]{
16W99,
16W10, 
16S99 
16S85, 
}

\keywords{Leavitt path algebra, regular algebra, ring of quotients, nonstable K-theory}

\begin{abstract} In the past, it has been shown that the Leavitt path algebra $L(E)=L_K(E)$ of a graph $E$ over a field $K$ is left and right noetherian if and only if the graph $E$ is finite and no cycle of $E$ has an exit. If $Q(E)=Q_K(E)$ denotes the regular algebra over $L(E),$ we prove that these two conditions are further equivalent with any of the following: $L(E)$ contains no infinite set of orthogonal idempotents, $L(E)$ has finite uniform dimension, $L(E)$ is directly finite, $Q(E)$ is directly finite, $Q(E)$ is unit-regular, $Q(E)$ is left (right) self-injective and a few more equivalences. In addition, if the involution on the field $K$ is positive definite, these conditions are equivalent with the following: the involution $^\ast$ extends from $L(E)$ to $Q(E),$ $Q(E)$ is $^\ast$-regular, $Q(E)$ is finite, $Q(E)$ is the maximal (total or classical) symmetric ring of quotients of $L(E),$ the maximal right ring of quotients of $L(E)$ is the same as the total (or classical) left ring of quotients of $L(E),$ every finitely generated nonsingular $L(E)$-module is projective, and the matrix ring $M_n(L(E))$ is strongly Baer for every $n$. It may not be surprising that a noetherian Leavitt path algebra has these properties, but a more interesting fact is that these properties hold {\em only} if a Leavitt path algebra is noetherian (i.e. $E$ is a finite no-exit graph).

Using some of these equivalences, we give a specific description of the inverse of the isomorphism $V(L(E))\rightarrow V(Q(E))$ of monoids of equivalence classes of finitely generated projective modules of $L(E)$ and $Q(E)$ for noetherian Leavitt path algebras. We also prove that two noetherian Leavitt path algebras are isomorphic as rings if and only if they are isomorphic as $^\ast$-algebras. This answers in affirmative the Isomorphism Conjecture for the class of noetherian Leavitt path algebras: if $L_{\mathbb C}(E)$ and $L_{\mathbb C}(F)$ are noetherian Leavitt path algebras, then $L_{\mathbb C}(E)\cong L_{\mathbb C}(F)$ as rings implies $C^*(E)\cong C^*(F)$ as $^\ast$-algebras.
\end{abstract}

\maketitle

\section{Introduction}

In the last decade, Leavitt path algebras have been generating significant interest. Introduced in \cite{Gene_Gonzalo1} and \cite{AMP}, these algebras represent an algebraic analog of a class of $C^*$-algebras. Leavitt path algebras are free algebras over a field that satisfy the same relations as the graph $C^*$-algebras (the introduction to M. Tomforde's paper \cite{Tomforde} is a good source for more details on graph $C^*$-algebras). In addition to being algebraic counterparts of graph $C^*$-algebras, Leavitt path algebras are generalization of the classical Leavitt algebras, the algebras that fail the invariant basis number property universally in a certain respect (for more details see \cite{Bergman}).

In \cite{AB}, a row-finite Leavitt path algebra $L(E)$ is embedded in a (von Neumann) regular algebra $Q(E)$ with isomorphic monoids of isomorphism classes of finitely generated projective modules. The algebra $Q(E)$ is called the regular algebra of a Leavitt path algebra. In this paper, we describe exactly when $Q(E)$ is unit-regular, self-injective, directly finite, and equal to the maximal (left and right) ring of quotients of $L(E)$. We relate these conditions with those stating that a Leavitt path algebra is right and left noetherian, directly finite and without infinite set of orthogonal idempotents.

By \cite[Theorems 3.8 and 3.10]{AAS2}), it is known that the Leavitt path algebra $L(E)$ over a graph $E$ is (left and right) noetherian exactly when $E$ is finite and no cycle in $E$ has an exit. This last condition is known as (NE) in the literature. In this paper the graphs satisfying Condition (NE) are called no-exit graphs for short. We obtain further characterizations of noetherian Leavitt path algebras as well as the characterizations of the above mentioned algebraic properties of $Q(E)$ over a finite no-exit graph $E$. In particular, we obtain a set of nine new equivalent conditions relating many algebraic properties of $L(E)$ and $Q(E)$ to the no-exit condition on a finite graph $E$ (Theorem \ref{equivalences1-14}). Moreover, if the involution on the base field $K$ is positive definite, we also prove that the involution of $L(E)$ extends to $Q(E)$ making $Q(E)$ $^\ast$-regular, symmetric and finite just in case that $E$ is a finite no-exit graph (Theorem \ref{equivalences15-19}).

After that, we consider when $Q(E)$ is equal to the maximal, total and classical left, right and symmetric ring of quotients of $L(E)$ adding another ten equivalent conditions (Corollary \ref{Q(E)_as_quotient} and Proposition \ref{extendible_LPAs}) to the list of the equivalences.
A Leavitt path algebra of a finite no-exit graph is hereditary and noetherian so it may not be surprising that it has all the properties mentioned above. However, we emphasize the fact that from our results it follows that a Leavitt path algebra enjoys all these properties {\em only} if the underlying graph is finite and without exits.

Using some of these results, we obtain an explicit description of the inverse of the isomorphism $V(L(E))\rightarrow V(Q(E))$ of monoids of equivalence classes of finitely generated projective modules over a Leavitt path algebra and its regular algebra (Theorem \ref{K0_theorem}).

Finally, we give some generalizations to the isomorphism conjectures posed in \cite[Conjecture 1]{Abrams_Tomforde}, and show that, for the class of Leavitt path algebras considered in this paper, a strongly generalized version of the Isomorphism Conjecture of Graph Algebras \cite[p. 22]{Abrams_Tomforde} holds. Concretely, for the family of noetherian Leavitt path algebras, if $L(E)$ and $L(F)$ are isomorphic as rings, then they are isomorphic as $^\ast$-algebras as well. This, in turn, expands the family of graphs for which a positive answer has been given to the Isomorphism Conjecture for Graph Algebras. Specifically, if $E$ and $F$ are finite no-exit graphs, then $L_{\mathbb C}(E)\cong L_{\mathbb C}(F)$ as rings implies that $C^*(E)\cong C^*(F)$ as $^\ast$-algebras.

The paper is organized as follows. In Section \ref{section_recap}, we recall the main definitions, the construction of the regular algebra and some existing results. In Section \ref{section_charac_noeth}, we prove the equivalence of fourteen  conditions describing algebraic properties of $L(E)$ for a finite no-exit graph $E$ and of its regular algebra $Q(E)$. In Section \ref{section_symmetric}, we describe $Q(E)$ as a ring of quotients of $L(E)$ and add additional three equivalences to previously obtained fourteen. In Section  \ref{section_K_0}, we add seven final conditions to the set of equivalences and use them to describe the inverse of the map $V(L(E))\rightarrow V(Q(E))$ of monoids of equivalence classes of finitely generated projective modules. Lastly, in Section \ref{section_isomorphism_conjecture} we answer in the affirmative both the Isomorphism Conjecture for Graph Algebras and the Strongly Generalized Isomorphism Conjecture for the class of noetherian Leavitt path algebras.

\section{Summary of the construction of the regular algebra and some related results}
\label{section_recap}

Throughout this paper, $K$ denotes a field and $E$ a directed graph. We write $E=(E^0, E^1, r, s)$ to denote that $E^0$ is the set of vertices, $E^1$ the set of edges, and $r$ and $s$ maps $E^1\rightarrow E^0$ describing ranges and sources of edges respectively. We say that $E$ is finite if both $E^0$ and $E^1$ are finite and that $E$ is row-finite if every vertex emits only finitely many edges ($|s^{-1}(v)|<\infty$ for all $v\in E^0$). A vertex $v$ is called a sink if $s^{-1}(v)=\emptyset.$ It is called a source if $r^{-1}(v)=\emptyset.$

A path of length $n$ is a sequence of edges of the form $p=e_1\ldots e_n$ for some positive integer $n$ such that $s(e_{i+1})=r(e_i)$ for $i=1,\ldots, n-1.$ In this case we define the source $s(p)$ of $p$ to be $s(e_1),$ and the range $r(p)$ of $p$ to be $r(e_n).$ If $s(p)=r(p),$ $p$ is said to be closed. If $p$ is closed and $s(e_i)\neq s(e_j)$ for $i\neq j,$ then $p$ is called a cycle. An edge $e$ is an exit of a path $p=e_1\ldots e_n$ if there exists $i$ such that $s(e) = s(e_i)$ and $e\neq e_i.$

Considering vertices to be the paths of length 0, let $E^*$ denote the set of paths of all non-negative lengths. The path algebra $P_K(E)$ is a free $K$-algebra over the set of all paths $E^*$ where the multiplication of paths $p$ and $q$ is the concatenation if $r(p)=s(q)$ and it is 0 otherwise. Alternatively, $P_K(E)$ is a free $K$-algebra with basis consisting of vertices and edges of $E$ such that
\begin{itemize}
\item[(P1)] $vv=v$ and $v w=0$ if $v\neq w,$
\item[(P2)]  $e=s(e)e=er(e),$
\end{itemize}
for all vertices $v$ and $w$ and all edges $e.$ The notation $P_K(E)$ is shortened to $P(E)$ when we work over the same (fixed) field.

For a given graph $E,$ consider an extended graph of $E$ to be the graph with the same vertices and with edges $\{e\ |\ e\in E^1\}\cup \{e^*\ |\ e\in E^1\}$ where the range and source relations are the same as in the original graph for $e\in E^1$, and $r(e^*)=s(e)$ and $s(e^*)=r(e)$ for the added edges. The edges $e^*$ are called ghost edges. The Leavitt path algebra $L_K(E)$ is the free $K$-algebra with basis consisting of vertices, edges and ghost edges that satisfies the path algebra axioms (P1) and (P2) in addition to
\begin{itemize}
\item[(CK1)] $e^*e=r(e),$ and $e^*f=0$ if $e\neq f$ for all $e,f\in E^1,$

\item[(CK2)] $v=\sum ee^*$ for all $e\in E^1$ with $v=s(e)$ and all $v\in E^0$ with $0<|s^{-1}(v)|<\infty.$
\end{itemize}
These last two axioms are called  Cuntz-Krieger relations. The notation $L_K(E)$ is also often shortened to $L(E)$ when there is no danger of confusion.

It a well-known fact that $L(E)$ is unital with the identity element $1=\sum_{v\in E^0} v$ if and only if $E^0$ is finite (e.g. see \cite[Lemma 1.6]{Gene_Gonzalo1}) and that $L(E)$ is a $\mathbb{Z}$-graded $K$-algebra, spanned as a $K$-vector space by $\{pq^{\ast } \ \vert \ p,q \text{ are paths in }E\}$. (Recall that the elements of $E^{0} $ are viewed as paths of length $0$, so that this set includes elements of the form $v$ with $v\in E^{0}$.) In particular, for each $n\in\mathbb{Z}$, the degree $n$ component $L_{K}(E)_{n}$ is spanned by $\{pq^{\ast }\ \vert \ p,q \text{ are paths in }E\text{ with } l(p)-l(q)=n\}$, where $l(p)$ denotes the length of $p$.

Note that $L(E)$ is a ring with involution. Namely, for an involution $k\mapsto \overline{k}$ of the field $K$ (which may be taken to be the identity), one can define $(\sum k_{p,q} pq^*)^*=\sum \overline{k_{p,q}}qp^*.$ In \cite[Lemma 1.3.1]{Work}, I. Raeburn notes that, for the class of row-finite graphs without sources, if the involution on $K$ is {\em positive definite} (i.e. $\sum_{i=1}^n \overline{k_i}k_i=0$ implies $k_i=0$ for all $i=1,\ldots,n$ and for all non-negative integers $n$), then the involution on $L(E)$ is {\em proper} (i.e. $x^{\ast}x=0$ implies $x=0$). In \cite[Proposition 2.3]{ARV} it is shown that this result holds for all graphs.

The algebra $L(E)$ is always nonsingular as a ring (\cite[Proposition 4.1]{Mercedes_quotients}) and it is hereditary if $E$ is finite (\cite[Theorem 3.5]{AMP}). In the last decade, numerous ring theoretic properties of $L(E)$ (for example being finite-dimensional, simple, semisimple, purely infinite simple, regular, noetherian, artinian, exchange, prime, primitive, to name just a few) have been characterized in terms of graph theoretic properties of the underlying graph $E.$ We recall one such result.

Recall that a graph $E$ is said to satisfy Condition (NE) (NE for ``no-exit''), or that $E$ is a \emph{no-exit graph} for short, if no cycle in $E$ has an exit.
Also, recall that a ${\mathbb Z}$-graded $K$-algebra $A=\bigoplus_{n\in \mathbb{Z}} A_n$ is \emph{locally finite} in case $\dim_K(A_n)<\infty$ for every $n\in {\mathbb Z}$.

\begin{theorem} {\bf (\cite[Theorems 3.8 and 3.10]{AAS2})} For a finite graph $E$ and field $K$ the following conditions are equivalent:
\begin{enumerate}
\item $E$ is a no-exit graph.
\item $L(E)$ is left noetherian.
\item $L(E)$ is right noetherian.
\item $L(E)$ is locally finite.
\item If $l$ is the number of cycles in $E$ (call
them $c_1,\dots,c_l$), $m_i$ the number of paths ending in a fixed (although arbitrary) vertex $v_{m_i}$ of
the cycle $c_i$ which do not contain the cycle itself (for $1\leq i \leq l$), $k$ is the number of
sinks in $E$ (call them $w_{l+1}, \dots, w_{l+k}$), and for every $j\in \{1, \dots, k\}$, $n_j$
is the number of paths ending in the sink $w_{l+j}$, then $L(E)$ is isomorphic to a direct sum of the matrix rings below
$$\left(\bigoplus_{i=1}^l M_{m_i}(K[x, x^{-1}])\right)\oplus
\left(\bigoplus_{j=1}^{k}M_{n_j}(K)\right).$$
\end{enumerate}
\label{noetherian_NE_theorem}
\end{theorem}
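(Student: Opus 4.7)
My strategy is to use (5) as a hub: I would prove $(1)\Rightarrow(5)$ by an explicit block decomposition of $L(E)$, deduce $(2), (3), (4)$ from (5) by standard ring-theoretic arguments, and close the loop by exhibiting, whenever some cycle has an exit, an infinite family of nonzero orthogonal idempotents in $L(E)$ that contradicts each of $(2), (3), (4)$.

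For $(1)\Rightarrow(5)$, the first observation is that in a finite no-exit graph every vertex on a cycle $c_i$ has exactly one outgoing edge---the next cycle edge---so cycles are pairwise vertex-disjoint. For each $c_i$ I would fix a vertex $v_{m_i}$, relabel the cycle so that $s((c_i)_1)=v_{m_i}$, and list the paths $p_1,\dots,p_{m_i}$ that end at $v_{m_i}$ and do not contain $c_i$; for each sink $w_{l+j}$ similarly list the paths $q_1,\dots,q_{n_j}$ ending at $w_{l+j}$. Applying (CK1) one checks $p_s^*p_t=\delta_{s,t}v_{m_i}$ and $q_s^*q_t=\delta_{s,t}w_{l+j}$: the only way for $p_s^*p_t$ to be nonzero with $s\neq t$ would require one of the paths to be a prefix of the other, but the remaining suffix would then be a closed path at $v_{m_i}$, which (NE) forces to be a power of $c_i$, contradicting the hypothesis on the $p_s$. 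Hence the products $p_sp_t^*$ and $q_sq_t^*$ form matrix units for the idempotents $E_i:=\sum_s p_sp_s^*$ and $F_j:=\sum_t q_tq_t^*$. An iterated application of (CK2), starting at sinks and cycle vertices and propagating backwards through the tree-part of the graph, yields the completeness relation $\sum_i E_i+\sum_j F_j=1$, and distinct blocks are mutually orthogonal since (CK1) annihilates products of distinct edges sharing a common source. Finally, iterated use of (CK1)-(CK2) yields $c_ic_i^*=c_i^*c_i=v_{m_i}$, so $c_i$ is a unit in the corner $v_{m_i}L(E)v_{m_i}$; combining this with the matrix units produces the isomorphism $E_iL(E)\cong M_{m_i}(K[x,x^{-1}])$ via $p_sc_i^np_t^*$ mapping to the matrix unit at position $(s,t)$ scaled by $x^n$, and similarly $F_jL(E)\cong M_{n_j}(K)$.

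The implications $(5)\Rightarrow(2),(3)$ follow from standard facts: $K[x,x^{-1}]$ is a PID, matrix rings over noetherian rings are noetherian, and finite direct sums preserve noetherianness. For $(5)\Rightarrow(4)$, the degree-$n$ component of $M_m(K[x,x^{-1}])$ is $m^2$-dimensional (spanned by the matrix units scaled by $x^n$). For the converse, suppose a cycle $c=c_1\cdots c_n$ has an exit $e\neq c_1$ at $v=s(c_1)$. Then $e^*c_1=0$ by (CK1), so $e^*c^k=0$ for all $k\geq 1$, and symmetrically $(c^*)^ke=0$ for $k\geq 1$. A short computation then shows the idempotents $g_k:=c^kee^*(c^*)^k$ ($k\geq 0$) are pairwise orthogonal, and each is nonzero since $(c^*)^kg_kc^k=ee^*\neq 0$. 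The resulting independent family of right ideals $\{g_kL(E)\}$ rules out $(2)$, its left analogue rules out $(3)$, and since every $g_k$ lies in $L(E)_0$ the $K$-linear independence of $\{g_k\}$ rules out $(4)$.

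The main obstacle will be the structural step $(1)\Rightarrow(5)$: establishing the matrix-unit identities together with the completeness relation $\sum_i E_i+\sum_j F_j=1$ requires a careful inductive argument that propagates (CK2) from sinks and cycle vertices through the rest of $E$, and the identification of each block as a matrix ring depends essentially on (NE) to make $c_i$ a unit in the corner algebra and to prevent additional relations from arising.
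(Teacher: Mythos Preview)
The paper does not prove this theorem; it is quoted from \cite[Theorems~3.8 and~3.10]{AAS2}. Your proposal nonetheless lines up with the original argument as far as it can be reconstructed from the present paper: the spanning set $\{p_sc_i^np_t^*\}$ you build for the blocks is exactly the basis $X$ that the paper recalls from \cite[Theorem~3.8]{AAS2} in the proof of Proposition~\ref{necessary}(ii), and your orthogonal idempotents $g_k=c^kee^*(c^*)^k$ are the same family the paper itself exploits in Proposition~\ref{sufficient}(ii). So the overall strategy is essentially that of \cite{AAS2}.

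One small point to repair: your argument for $(5)\Rightarrow(4)$ asserts that the degree-$n$ component of $M_m(K[x,x^{-1}])$ is $m^2$-dimensional, implicitly treating the isomorphism as graded for the standard grading on $K[x,x^{-1}]$. It is not: $p_sc_i^np_t^*$ has degree $|p_s|+n|c_i|-|p_t|$ in $L(E)$, not $n$. The fix is immediate from your own basis---since the $|p_s|,|p_t|$ are bounded and $|c_i|\geq 1$, only finitely many basis elements lie in each $L(E)_d$; equivalently, grade $M_{m_i}(K[x,x^{-1}])$ by $\deg x=|c_i|$ and $\deg e_{st}=|p_s|-|p_t|$, and then your map is graded. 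Alternatively, route the implication through $(5)\Rightarrow(2)\Rightarrow(1)$ and read off local finiteness directly from the basis you constructed under~(1).
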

This result was generalized for locally noetherian Leavitt path algebras over row-finite graphs in \cite[Theorem 3.7]{NE-paper}.

Lastly, we recall the concept of the regular ring of a (Leavitt) path algebra. In \cite{AB}, the regular ring $Q_K(E)$ of a path algebra $P(E)$ (and its Leavitt path algebra $L(E)$) is constructed. We outline the main idea of the construction from \cite{AB}. In the following, we fix the field $K$ and shorten the notation $Q_K(E)$ to $Q(E).$

\begin{enumerate}
\item[(i)] Let $E$ be a finite graph. Let $\Sigma$ be the set of matrices with entries in the path algebra $P(E)$ that become invertible in the algebra of power series $P((E))=\{\, \sum k_pp\, |\, p\in E^*$ possibly infinitely many $k_p$ are nonzero $\}.$

The universal localization of $P(E)$ with respect to $\Sigma$ (obtained by adding the entries of the inverse matrices of matrices in $\Sigma$ to $P(E)$) is the division and the rational closure of $P(E)$ in $P((E))$ (see \cite[Observation 1.18 and Theorem 1.20]{AB}). In \cite{AB}, it is denoted by $P_{rat}(E).$ By \cite[Proposition 2.15]{AB}, $P_{rat}(E)$ is always semihereditary.

\item[(ii)] Let $\Sigma_1$ be the set of the following homomorphisms of finitely generated projective left $P(E)$-modules. For every non-sink vertex $v,$ let $e_1,\ldots e_n$ be all the edges that $v$ emits. The homomorphism $\mu$ mapping $P(E)v$ to $P(E)r(e_1)\oplus\ldots\oplus P(E)r(e_n)$ by $r\mapsto (re_1,\ldots, re_n)$ is in $\Sigma_1$ and the algebra $L(E)$ is the universal localization of $P(E)$ with respect to $\Sigma_1.$

Also, let $\overline{E}$ be the opposite graph of $E,$ i.e. the graph with $\overline{E}^0=E^0,$ $\overline{E}^1=\{\overline{e}\ |\ e\in E^1\},$ $\overline{s}(\overline{e})=r(e)$ and  $\overline{r}(\overline{e})=s(e).$ Let $\Sigma_2$ denote the set of the following homomorphisms of finitely generated projective left $P(\overline{E})$-modules: for every non-sink vertex $v,$ let $e_1,\ldots e_n$ be all the edges that $v$ emits and $\nu$ be the mapping $P(\overline{E})r(e_1)\oplus\ldots\oplus P(\overline{E})r(e_n)$ to $P(\overline{E})v$ by $(r_1,\ldots, r_n)\mapsto \sum r_i \overline{e}_i.$ Then $L(E)$ is the universal localization of $P(\overline{E})$ with respect to $\Sigma_2.$

\item[(iii)] The ring $Q(E)$ is the universal localization of $P_{rat}(E)$ with respect to $\Sigma_1.$ It is also the universal localization of $L(E)$ with respect to $\Sigma.$ Finally, it is also universal localization of $P(E)$ with respect to $\Sigma\cup\Sigma_1.$

Moreover, if $E$ has $d$ vertices, the following diagram commutes.
$$\xymatrix{
K^d\ar[d]\ar[r]&P(E)\ar[r]^{\Sigma^{-1}}\ar[d]^{\Sigma_1^{-1}}&P_{rat}(E)\ar[r]\ar[d]^{\Sigma_1^{-1}}&P((E))\ar[d]^{\Sigma_1^{-1}}\\
P(\overline{E})\ar[r]^{\Sigma_2^{-1}} & L(E)\ar[r]^{\Sigma^{-1}}  &  Q(E)\ar[r] & U(E) }$$
Here the algebra $U(E)$ is the universal localization of $P((E))$ with respect to $\Sigma_1^{-1}.$

\item[(iv)] The ring $Q(E)$ satisfies the path algebra axioms (P1), (P2), together with (CK1) and (CK2).

\item[(v)] The ring $Q(E)$ is regular, hereditary and such that the monoids of finitely generated projectives $V(L(E))$ and $V(Q(E))$ are isomorphic (see \cite[Theorem 3.5]{AMP} and \cite[Theorem 4.2]{AB}).

\item[(vi)] The algebra $L(E)$ is a perfect right ring of quotients of $P(E)$ and a perfect left ring of quotients of $P(\overline{E})$. The ring $Q(E)$ is the total left ring of quotients of $P(\overline{E})$ and total left ring of quotients of $L(E)$ (all the necessary background on rings of quotients can be found in \cite{Stenstrom}). So we have that
\[Q(E)=\Qltot(L(E))=\Qltot(P(\overline{E}))\]
The proofs of these claims can be found in \cite{Miquel}.

\item[(vii)] If $E$ is row-finite (but not finite necessarily), $Q(E)$ is the direct limit of the regular rings of path algebras of finite subgraphs of $E$ (\cite[paragraph before Theorem 4.4]{AB}).
\end{enumerate}

\section{Characterizations of noetherian Leavitt path algebras}
\label{section_charac_noeth}

In this section, $E$ denotes a finite graph. First, we prove two sufficient conditions for $E$ to be no-exit. Then, we prove some necessary conditions for $E$ to be no-exit. As a corollary, we obtain a list of equivalences to the condition that $E$ is no-exit. After that, we study some implications of the statement that the involution extends from a Leavitt path algebra to its regular algebra. As a corollary, we obtain further characterizations of noetherian Leavitt path algebras in case when the involution on underlying field is positive definite.

\begin{proposition} Any of the following two conditions imply that $E$ is a no-exit graph.
\begin{itemize}
\item[(i)] $L(E)$ is finite.

\item[(ii)] $L(E)$ contains no infinite set of orthogonal idempotents.
\end{itemize}
\label{sufficient}
\end{proposition}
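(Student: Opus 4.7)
The plan is to prove the contrapositive of both implications by means of a single explicit construction. Suppose $E$ contains a cycle $c=e_1\cdots e_n$ that has an exit. After cyclically rotating, I may assume that the exit sits at the base vertex $v:=s(c)=r(c)$, so that axiom (CK2) at $v$ reads
\[
v = e_1e_1^* + \sum_{s(f)=v,\,f\ne e_1} ff^*,
\]
where the second summand is a nonzero idempotent; in particular $e_1e_1^*<v$ as self-adjoint idempotents.

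Using (CK1) iteratively collapses $c^*c=e_n^*\cdots e_1^*e_1\cdots e_n$ to $v$. For $cc^*=e_1\cdots e_ne_n^*\cdots e_1^*$ I would telescope from the inside: since $e_ie_i^*\le s(e_i)=r(e_{i-1})$ by (CK2), I obtain $e_{i-1}e_ie_i^*e_{i-1}^*\le e_{i-1}r(e_{i-1})e_{i-1}^*=e_{i-1}e_{i-1}^*$, and by induction $cc^*\le e_1e_1^*<v$. Setting $p_0:=v-cc^*$, I then have a nonzero idempotent bounded above by $v$ satisfying the key identities
\[
c^*p_0 = c^*-c^*cc^* = c^*-vc^* = 0, \qquad p_0c = c-cc^*c = c-cv = 0,
\]
together with $(c^*)^kc^k=v$, $vc=c$ and $c^*v=c^*$ for all $k\ge 0$.

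For (i), in the unital ring $L(E)$ (unital because $E^0$ is finite) set $x:=c+(1-v)$ and $y:=c^*+(1-v)$. Since $c=vc=cv$ and $c^*=vc^*=c^*v$, the cross terms vanish and one computes $yx=c^*c+(1-v)=1$, while $xy=cc^*+(1-v)\ne 1$ because $cc^*\ne v$. Thus $L(E)$ is not directly finite. For (ii), define $p_k:=c^kp_0(c^*)^k$ for $k\ge 0$. Each $p_k$ is idempotent thanks to $(c^*)^kc^k=v$ and $p_0v=p_0$, and non-zero since $(c^*)^kp_kc^k=p_0\ne 0$. For $k>m$, the identity $(c^*)^kc^m=(c^*)^{k-m}$ combined with $c^*p_0=0$ gives $(c^*)^{k-m}p_0=(c^*)^{k-m-1}(c^*p_0)=0$, whence
\[
p_kp_m = c^kp_0(c^*)^{k-m}p_0(c^*)^m = 0,
\]
and symmetrically, using $p_0c=0$, $p_mp_k=0$. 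Hence $\{p_k\}_{k\ge 0}$ is an infinite family of pairwise orthogonal nonzero idempotents in $L(E)$.

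The only delicate step is the telescoping inequality $cc^*<v$, which requires bookkeeping along the cycle with (CK2) at each intermediate vertex; once that is secured, the failure of direct finiteness and the production of the infinite orthogonal family are immediate from the explicit witnesses $x,y$ and $\{p_k\}$.
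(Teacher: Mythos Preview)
Your argument is correct and follows essentially the same blueprint as the paper's proof, with two small variations worth noting.

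For (i), both you and the paper use the element $x=c+(1-v)$; the paper computes $x^*x=1$ and derives $cc^*=v$ from $xx^*=1$, then contradicts this by multiplying on the left by $e^*$ for the exit edge $e$ (getting $0=e^*cc^*=e^*v=e^*$). You instead prove $cc^*\le e_1e_1^*<v$ by a telescoping argument along the cycle. Both are fine. One terminological point: you conclude ``$L(E)$ is not directly finite,'' but the hypothesis in (i) is that $L(E)$ is \emph{finite} (i.e.\ $x^*x=1\Rightarrow xx^*=1$). Since your $y$ equals $x^*$, your computation in fact shows $x^*x=1$ and $xx^*\neq 1$, so $L(E)$ is not finite; you should state this explicitly rather than the weaker-sounding (but actually stronger) ``not directly finite.''

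For (ii), the paper uses the seed idempotent $ee^*$ (for the specific exit edge $e$) and forms $c^nee^*(c^*)^n$; orthogonality comes from $e^*c=0$, and to show the family is infinite the paper invokes the linear independence of ghost paths in $L(E)$ (citing an external reference). Your choice of seed $p_0=v-cc^*$ is a mild improvement: the identities $c^*p_0=0$ and $p_0c=0$ are purely algebraic consequences of $c^*c=v$, and nonvanishing of each $p_k$ follows from $(c^*)^kp_kc^k=p_0\ne 0$, so no appeal to a basis result is needed.
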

\begin{proof} (i) Let us assume that $L(E)$ is finite but that $E$ has a cycle $p$ with an exit $e$. By rotating the cycle if necessary, we can assume that the exit $e$ occurs at the base of the cycle $p$, so that $p=e_1e_2\dots e_n$ and $v=s(e_1)=s(e)$ with $e_1\neq e.$ Consider the element $x=p+\sum_{w\neq v}w$ that satisfies: $x^*x=(p^*+\sum_{w\neq v}w)(p+\sum_{w\neq
v}w)=p^*p+\sum_{w\neq v}w=v+\sum_{w\neq v}w=1.$ Then $xx^*=(p+\sum_{w\neq v}w)(p^*+\sum_{w\neq
v}w)=pp^*+\sum_{w\neq v}w=pp^*+1-v=1$ and so $pp^*=v.$ Multiplying by $e^*$ on the left we
have $0=(e^*e_1)(e_2\dots e_n)p^*=e^*pp^*=e^*v=e^*\neq 0$, a contradiction. Thus, there cannot be a cycle with an exit and so $E$ is a no-exit graph.

(ii) Suppose, by way of contradiction, that $E$ is not a no-exit graph so that there exists a
cycle $p$ with an exit $e$. By relabeling the vertices if necessary, we can assume that $s(e)$ is the base of
the cycle so that we can write $p=e_1\dots e_n$ with $s(p)=s(e_1)=s(e)$ and $e_1\neq e$. In this case we
clearly have $e_1^*e=0=e^*e_1$ which in turn implies $p^*e=e^*p=0$. Consider the set $\f=\{p^nee^*(p^*)^n \}_{n=1}^{\infty}.$ It is easy to check that the elements of $\f$ are orthogonal idempotents. However, $\f$ is infinite. To see that let $n>m$ and assume that $p^nee^*(p^*)^n=p^mee^*(p^*)^m.$ Multiplying by $e^*(p^*)^m$ on the left, we obtain
$$0=(e^*p^{n-m})ee^*(p^*)^n=e^*(p^*)^mp^nee^*(p^*)^n=e^*(p^*)^mp^mee^*(p^*)^m=e^*(p^*)^m,$$ a contradiction
with the fact that ghost paths are linearly independent in $L(E)$ by \cite[Lemma 1.6]{G2}. Thus, we obtain an infinite set of orthogonal idempotents, a contradiction with the hypothesis.
\end{proof}

\begin{proposition} The condition that $E$ is a no-exit graph implies any of the following two conditions.
\begin{itemize}
\item[(i)] $Q(E)$ is unit-regular, (left and right) self-injective and $Q(E)=\Qltot(L(E))=\Qlmax(L(E)).$

\item[(ii)] If $\phi$ is the isomorphism
$$\phi: L(E)\cong \left(\bigoplus_{i=1}^l M_{m_i}(K[x, x^{-1}])\right)\oplus
\left(\bigoplus_{j=1}^{k}M_{n_j}(K)\right)$$ as in Theorem \ref{noetherian_NE_theorem}
where $l$ is the number of cycles  $c_1,\dots,c_l$  in $E$, $m_i$ the number of paths ending in a fixed vertex $v_{m_i}$ of
the cycle $c_i$ which do not contain the cycle itself for $1\leq i \leq l$, $k$ is the number of
sinks $w_{l+1}, \dots, w_{l+k}$ in $E$, and for every $j\in \{1, \dots, k\}$, $n_j$
is the number of paths ending in the sink $w_{l+j}$, then this algebra isomorphism is  a $^\ast$-isomorphism.
\end{itemize}
\label{necessary}
\end{proposition}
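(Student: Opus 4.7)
The plan is to exploit the explicit form of $L(E)$ given by Theorem \ref{noetherian_NE_theorem}: under the no-exit hypothesis, $L(E)$ is isomorphic to the ring $A := \bigoplus_{i=1}^l M_{m_i}(K[x,x^{-1}]) \oplus \bigoplus_{j=1}^k M_{n_j}(K)$. Both parts of the proposition follow by reading off concrete data from this decomposition, together with the universal properties of $Q(E)$ collected in Section \ref{section_recap}.

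For (i), the first step is to identify $Q(E)$ with the semisimple artinian ring $B := \bigoplus_{i=1}^l M_{m_i}(K(x)) \oplus \bigoplus_{j=1}^k M_{n_j}(K)$. The ring $A$ is a finite direct sum of prime Goldie rings, hence semiprime Goldie, and its classical left ring of quotients is computed summand-wise as $B$, using that $K(x)$ is the classical ring of quotients of $K[x,x^{-1}]$. Thus $\Qltot(L(E)) = B$, and point (vi) of Section \ref{section_recap} then yields $Q(E) = B$. The ring $B$ is semisimple artinian, hence unit-regular and two-sided self-injective. Since any self-injective left ring of quotients coincides with the maximal left ring of quotients, we conclude $Q(E) = \Qlmax(L(E))$.

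For (ii), I would make $\phi$ fully explicit on a $K$-spanning set. For each cycle $c_i$ based at the chosen vertex $v_{m_i}$ and paths $p_{i,1},\dots,p_{i,m_i}$ ending at $v_{m_i}$ that do not contain $c_i$, set $\phi(p_{i,k} c_i^n p_{i,l}^{\ast}) = x^n E_{kl}$ in $M_{m_i}(K[x,x^{-1}])$; for each sink $w_{l+j}$ and paths $q_{j,1},\dots,q_{j,n_j}$ ending at it, set $\phi(q_{j,k} q_{j,l}^{\ast}) = E_{kl}$ in $M_{n_j}(K)$. Equip the target with the natural involution, combining conjugate transpose on matrix units with $x^{\ast} = x^{-1}$ on the Laurent summands. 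The key observation is that the no-exit condition forces $c_i c_i^{\ast} = v_{m_i} = c_i^{\ast} c_i$, so $c_i^{\ast}$ acts as $c_i^{-1}$ inside the corner at $v_{m_i}$. Consequently, $(p_{i,k} c_i^n p_{i,l}^{\ast})^{\ast} = p_{i,l} c_i^{-n} p_{i,k}^{\ast}$, which $\phi$ sends to $x^{-n}E_{lk}$, matching the prescribed involution on $B$; the sink case is strictly analogous.

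The main obstacle I anticipate is verifying that the chosen representatives really yield matrix units, namely $p_{i,k}^{\ast} p_{i,l} = \delta_{kl} v_{m_i}$ and $q_{j,k}^{\ast} q_{j,l} = \delta_{kl} w_{l+j}$. This is exactly where the no-exit hypothesis reappears: distinct paths ending at a common vertex must disagree at some edge, in which case a (CK1) relation forces the product to vanish, while the no-exit condition rules out the cycling that would otherwise produce a nontrivial ``prefix'' relation among the $p_{i,k}$. Once these orthogonality relations are in place, $^{\ast}$-compatibility needs only be checked on the chosen spanning set, since both involutions are conjugate-linear anti-homomorphisms.
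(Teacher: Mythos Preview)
Your argument for (ii) is essentially the same as the paper's: both make $\phi$ explicit on the spanning set $\{p_r c_t^z p_s^{\ast}\}$ and verify $^{\ast}$-compatibility by direct computation, using that $(p_r c_t^z p_s^{\ast})^{\ast} = p_s c_t^{-z} p_r^{\ast}$ is sent to $x^{-z}e_{sr} = (x^z e_{rs})^{\ast}$. The ``main obstacle'' you anticipate is not actually an obstacle: Theorem~\ref{noetherian_NE_theorem} (i.e.\ \cite[Theorem~3.8]{AAS2}) already proves that the nonzero $p_r c_t^z p_s^{\ast}$ form a $K$-basis and that $\phi$ is a $K$-algebra isomorphism, so the matrix-unit relations are already established there and you may simply cite them rather than re-deriving the orthogonality from (CK1) and the no-exit hypothesis.

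For (i) you take a genuinely different route from the paper. The paper argues abstractly: $L(E)$ is noetherian (Theorem~\ref{noetherian_NE_theorem}) and hereditary (\cite[Theorem~3.5]{AMP}), and for a noetherian hereditary ring $\Qltot = \Qlmax$ by \cite[Example~3, p.~235]{Stenstrom}; then $Q(E) = \Qlmax$ is left self-injective, and a left self-injective left hereditary ring is semisimple (\cite[Theorem~7.52]{Lam}), hence right self-injective and unit-regular. Your approach instead identifies $Q(E)$ explicitly with the semisimple ring $B = \bigoplus_i M_{m_i}(K(x)) \oplus \bigoplus_j M_{n_j}(K)$ and reads everything off from that. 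This is more concrete and yields the explicit description of $Q(E)$ as a bonus; the paper's argument, on the other hand, never needs to name $B$. One step in your outline does need an extra sentence: you compute the \emph{classical} left ring of quotients of $A$ as $B$ and then write $\Qltot(L(E)) = B$, which conflates $\Qlcl$ with $\Qltot$. The bridge is that $A$ is semiprime Goldie, so $\Qlcl(A) = \Qlmax(A)$, and since one always has $\Qlcl \subseteq \Qltot \subseteq \Qlmax$ (Ore localizations being perfect), all three coincide with $B$. With that remark inserted, your proof of (i) is complete.
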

\begin{proof} (i) Assume that $E$ is a no-exit graph. Then $L(E)$ is noetherian by Theorem \ref{noetherian_NE_theorem} and hereditary by \cite[Theorem 3.5]{AMP}. In this case, a result from \cite[Example 3, p. 235]{Stenstrom} states that the maximal and total left rings of quotients are equal. So, $Q(E)=\Qltot(L(E))=\Qlmax(L(E)).$ Thus $Q(E)$ is left self-injective since $\Qlmax(L(E))$ is. On the other hand, a left self-injective and left hereditary ring is semisimple (\cite[Theorem 7.52]{Lam}) and thus it is right self-injective as well. But a regular left and right self-injective ring is unit-regular by \cite[Theorem 9.29]{G}.

(ii) Following the proof of \cite[Theorem 3.8]{AAS2}, the basis of $L(E)$ can be described as follows. Let $\Lambda_i,$ $i=1,\ldots,l,$ be the set of paths ending in a fixed vertex $v_{m_i}$ of the cycle $c_i$ which do not contain the cycle itself, and let $\Lambda_j,$ $j=l+1,\ldots, l+k,$ be the set of paths ending in a sink $w_{l+j}.$ Let $\Lambda=\bigcup_{t=1}^{l+k} \Lambda_t.$ If $\lambda$ denotes the cardinality of $\Lambda,$ index the elements of $\Lambda$ as $p_s,$ $s=1,\ldots, \lambda.$ Finally, let
\[X = \{\, p_r c^z_t  p_s^*\, |\, z \in \Zset,\, r,s = 1,\ldots, \lambda,\,  t = 1,\ldots, l+k\,\}\]
where $c_t$ denotes $w_t$ for $t > l$ and $c_t^{-z}$ denotes $(c_t^*)^z$ for positive $z$ and
$t=1,\ldots, l.$
In \cite[Theorem 3.8]{AAS2} it is shown that the nonzero elements of $X$ constitute a basis of $L(E)$ and that $\phi$ is the isomorphism mapping a nonzero element $p_r c^z_t  p_s^*$ of $X$ for $t\leq l$ to $x^z e_{rs}$ where $e_{rs}$ is the standard matrix unit in appropriate matrix algebra $M_{m_i}(K[x,x^{-1}])$ for appropriate $m_i.$ For $t>l,$ $\phi$ maps a nonzero element  $p_r w_t p_s^*=p_rp_s^*$ of $X$ to the standard matrix unit $e_{rs}$ in the appropriate matrix algebra $M_{n_j}(K)$ for appropriate $n_j.$

The isomorphism $\phi$ is obtained as an $K$-algebra extension of this map of the basis elements. This map has to be $^\ast$-isomorphism then also because on the basis elements,
$\phi((p_r c^z_t  p_s^*)^*)=\phi(p_s c_t^{-z}p_r^*)=x^{-z}e_{sr}=(x^ze_{rs})^*=(\phi(p_r c^z_t  p_s^*))^*$ if $t\leq l.$ If $t>l$ the claim follows similarly.
\end{proof}

This proposition gives us that the involution of a noetherian Leavitt path algebra $L(E)$ corresponds to the conjugate transpose involution of the sum of matrix algebras $\left(\bigoplus_{i=1}^l M_{m_i}(K[x, x^{-1}])\right)\oplus
\left(\bigoplus_{j=1}^{k}M_{n_j}(K)\right)$ where the involution on $K[x,x^{-1}]$ is given by $q\mapsto \overline{q}$ for $q\in K$ and $x\mapsto x^{-1}.$

\begin{theorem} Let $E$ be a finite graph.
The following conditions are equivalent.
\begin{enumerate}
\item[(1)] $E$ is a no-exit graph.
\item[(6)] $L(E)$ contains no infinite set of orthogonal idempotents.
\item[(7)] $Q(E)$ is unit-regular.
\item[(8)] $Q(E)$ is directly finite (i.e. $xy=1$ implies that $yx=1$ for all $x$ and $y$).
\item[(9)] $L(E)$ is directly finite.
\item[(10)] $L(E)$ is finite (i.e. $x^*x=1$ implies that $xx^*=1$ for all $x$).
\item[(11)] $Q(E)$ is left and right self-injective.
\item[(12)] $Q(E)$ is semisimple.
\item[(13)] $L(E)$ has finite uniform dimension (as a left and as a right $L(E)$-module).
\item[(14)] The monoid of equivalence classes of finitely generated projectives $V(L(E))\cong V(Q(E))$ is cancellative.
\end{enumerate}
\label{equivalences1-14}
\end{theorem}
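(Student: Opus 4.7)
The plan is to build a cycle of implications tying all ten conditions to (1), reusing Propositions \ref{sufficient} and \ref{necessary} and the explicit matrix description in Theorem \ref{noetherian_NE_theorem}.

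I would begin with the self-injectivity / unit-regularity spine. Proposition \ref{necessary}(i) already delivers $(1) \Rightarrow (7)$ and $(1) \Rightarrow (11)$, together with the identification $Q(E) = \Qlmax(L(E))$. Since $L(E)$ is hereditary on a finite graph by \cite[Theorem 3.5]{AMP} and this transfers to $Q(E)$ via item (v) of the construction recalled in Section \ref{section_recap}, I can apply \cite[Theorem 7.52]{Lam} (a one-sided self-injective one-sided hereditary ring is semisimple) to obtain $(11) \Rightarrow (12)$. From semisimplicity of $Q(E)$, the implications $(12) \Rightarrow (7), (11), (8), (14)$ are immediate: finite products of matrix rings over division rings are unit-regular, two-sided self-injective, directly finite, and their $V$-monoid is $\mathbb{N}^n$, which is cancellative.

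Closing back to (1) proceeds along the direct-finiteness chain: $(7) \Rightarrow (8)$ is the standard fact that unit-regular rings are directly finite; $(8) \Rightarrow (9)$ because $L(E) \hookrightarrow Q(E)$ is a unital inclusion, so $xy = 1$ in $L(E)$ implies $yx = 1$ in $Q(E)$, hence in $L(E)$; $(9) \Rightarrow (10)$ by applying direct finiteness to the pair $(x^*, x)$ in the $\ast$-ring $L(E)$; and $(10) \Rightarrow (1)$ is exactly Proposition \ref{sufficient}(i). This closes the loop $(1) \Rightarrow (11) \Rightarrow (12) \Rightarrow (7) \Rightarrow (8) \Rightarrow (9) \Rightarrow (10) \Rightarrow (1)$.

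Conditions (6) and (13) are handled separately. From Theorem \ref{noetherian_NE_theorem}(5) each summand $M_{m_i}(K[x,x^{-1}])$ and $M_{n_j}(K)$ is noetherian and so has finite uniform dimension and contains no infinite orthogonal set of idempotents; both properties pass to the finite direct sum, giving $(1) \Rightarrow (13)$ and $(1) \Rightarrow (6)$. The reverse $(13) \Rightarrow (6)$ is formal, since an infinite orthogonal family $\{e_i\}$ yields an infinite independent family of left ideals $L(E) e_i$; and $(6) \Rightarrow (1)$ is Proposition \ref{sufficient}(ii).

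The step I expect to require the most care is $(14) \Rightarrow (8)$, needed to integrate (14) into the cycle. The approach is a standard $V$-theoretic argument: from $xy = 1$ in $Q(E)$ one obtains a right $Q(E)$-module splitting $Q(E) = yQ(E) \oplus (1-yx)Q(E)$ in which $yQ(E) \cong Q(E)$, since right multiplication by $x$ is a left inverse to right multiplication by $y$. Hence $[Q(E)] = [Q(E)] + [(1-yx)Q(E)]$ in $V(Q(E))$, and cancellation forces $(1-yx)Q(E) = 0$, i.e., $yx = 1$. The only real subtlety is verifying this splitting and the free-of-rank-one identification cleanly; once that is in hand, the cycle is complete and all ten conditions are equivalent.
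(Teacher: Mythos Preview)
Your proof is correct and follows essentially the same strategy as the paper: both arguments hinge on Propositions \ref{sufficient} and \ref{necessary} together with standard ring-theoretic facts, and the main cycle $(1)\Rightarrow(11)\Rightarrow(12)\Rightarrow(7)\Rightarrow(8)\Rightarrow(9)\Rightarrow(10)\Rightarrow(1)$ matches the paper's spine up to minor rewiring (the paper goes $(11)\Rightarrow(7)$ directly via \cite[Theorem 9.29]{G} rather than through $(12)$).

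There are two small but genuine differences worth noting. For $(13)$, the paper invokes \cite[Theorem 7.58]{Lam} to get the equivalence with the noetherian conditions $(2)$ and $(3)$ from Theorem~\ref{noetherian_NE_theorem}, whereas you extract $(1)\Rightarrow(13)$ from the explicit matrix decomposition and close the loop with the elementary observation $(13)\Rightarrow(6)$; your route is more self-contained. For $(14)$, the paper quotes Handelman's theorem \cite[Theorem 2]{H} to get $(14)\Leftrightarrow(7)$ for the regular ring $Q(E)$, while you give the direct $V$-theoretic argument $(14)\Rightarrow(8)$ (the splitting $Q(E)=yQ(E)\oplus(1-yx)Q(E)$ with $yQ(E)\cong Q(E)$ is exactly right, and cancellation forces $yx=1$). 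Your argument here is slightly more elementary and works in any unital ring, not just a regular one; the paper's citation is shorter but imports a stronger result than is strictly needed.
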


The gap in the numbering of the conditions (the first condition in Theorem \ref{equivalences1-14} is labeled by (1) and the second by (6)) indicates conditions (2)--(5) from Theorem \ref{noetherian_NE_theorem}. Any mention of (2)--(5) in the proof of Theorem \ref{equivalences1-14} refers to the conditions from Theorem \ref{noetherian_NE_theorem}.

\begin{proof}
(1) $\Rightarrow$ (6). Condition (1) implies that $L(E)$ is left and right noetherian by Theorem \ref{noetherian_NE_theorem}. But a noetherian ring contains no infinite set of orthogonal idempotents.

The implication (6) $\Rightarrow$ (1) is Proposition \ref{sufficient}.

The implication (1) $\Rightarrow$ (7) is Proposition \ref{necessary}.

The implication (7) $\Rightarrow$ (8) is \cite[Proposition 5.2]{G}.

The implications (8) $\Rightarrow$ (9) $\Rightarrow$ (10) hold by definition. 

The implication (10) $\Rightarrow$ (1) is Proposition \ref{sufficient}.
Thus, conditions (1) -- (10) are equivalent.

The implication (1) $\Rightarrow$ (11) is Proposition \ref{necessary}. 

The implication (11) $\Rightarrow$ (7) is \cite[Theorem 9.29]{G}. 

(11) $\Leftrightarrow$ (12). One direction holds because a hereditary and self-injective ring is semisimple. Conversely, a semisimple ring is both hereditary and self-injective.

The equivalences (13) $\Leftrightarrow$ (2) and (3) follow from \cite[Theorem
7.58]{Lam}.

(14) $\Leftrightarrow$ (7). \cite[Theorem 2]{H} asserts that $Q(E)$ is unit-regular if and only if $V(Q(E))$ is cancellative. The isomorphism of monoids $V(Q(E))\cong V(L(E))$ is shown in \cite[Theorem 4.2]{AB} and \cite[Theorem 3.5]{AMP}.

Thus, conditions (1) -- (14) are equivalent.
\end{proof}

\begin{remark}
The authors are grateful to the referee for pointing out that the conditions (1)--(14) are also equivalent with the statement that $L(E)$ is a polynomial identity (PI) ring. Since a matrix ring over a commutative ring is a PI ring and a finite direct product of PI rings is a PI ring, condition (5) implies that $L(E)$ is a PI ring. Conversely, if $L(E)$ is a PI ring, then it is directly finite so the condition (9) holds.
\end{remark}

Theorem \ref{equivalences1-14} has the following corollary.
\begin{corollary}
If $E$ is a finite no-exit graph, then  $L(E)$ is Baer.
\label{Baer}
\end{corollary}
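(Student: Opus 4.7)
The plan is to invoke the explicit ring-theoretic decomposition of $L(E)$ provided by Theorem \ref{noetherian_NE_theorem}(5), namely
$$L(E) \cong \left(\bigoplus_{i=1}^l M_{m_i}(K[x, x^{-1}])\right) \oplus \left(\bigoplus_{j=1}^k M_{n_j}(K)\right),$$
and verify the Baer property one summand at a time. The first, elementary, observation is that a finite direct product of Baer rings is Baer: the right annihilator of any subset $S$ of $R_1\times\cdots\times R_N$ splits as the product of the right annihilators of the coordinate projections of $S$, and a tuple of idempotents is again an idempotent.

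For the blocks $M_{n_j}(K)$ there is nothing to do: they are semisimple Artinian and hence Baer. The substantive case is showing that each block $M_{m_i}(R)$ with $R:=K[x,x^{-1}]$ is Baer. Here $R$ is a commutative principal ideal domain, and in particular an integral domain, which is trivially Baer (the only right annihilators of subsets are $0$ and $R$). For a single element $A\in M_n(R)$, viewed as an $R$-linear endomorphism of $R^n$, the kernel $\ker A$ is a saturated submodule of $R^n$ because $R^n/\ker A$ embeds in $R^n$ and is therefore torsion-free; the structure theorem over a PID then produces a free complement $C$ with $R^n=\ker A\oplus C$, and the associated projection is an idempotent $P\in M_n(R)$ such that $\mathrm{rann}(A)=P\cdot M_n(R)$.

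The main obstacle I expect is the extension from single elements to arbitrary subsets. Here I would use the right noetherianity of $M_n(R)$ (guaranteed by Theorem \ref{equivalences1-14}) to reduce to a finite set $\{A_1,\ldots,A_r\}$ and then verify that $\bigcap_{i=1}^r\ker A_i\subseteq R^n$ remains saturated, which is an elementary but crucial consequence of $R$ being a domain: if $rx\in\bigcap_i\ker A_i$ for some nonzero $r\in R$, then saturation of each $\ker A_i$ forces $x\in\ker A_i$ for every $i$. The structure theorem once again yields a complementary free summand and the desired idempotent generator for $\mathrm{rann}\{A_1,\ldots,A_r\}$. An alternative shortcut is to cite the classical theorem that matrix rings over commutative Baer rings are Baer, bypassing this technical step entirely.
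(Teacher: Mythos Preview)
Your main line of argument is correct and takes a genuinely different route from the paper. The paper instead invokes the equivalence (1)$\Leftrightarrow$(6) of Theorem~\ref{equivalences1-14}: since $L(E)$ has no infinite set of orthogonal idempotents and is (left and right) Rickart, being hereditary, \cite[Theorem~7.55]{Lam} yields the Baer property in one stroke. So the paper's proof is a single citation leveraging machinery already established, whereas you work concretely through the matrix decomposition of Theorem~\ref{noetherian_NE_theorem}(5) and the structure theorem over the PID $K[x,x^{-1}]$. A minor simplification of your argument: the noetherian reduction to finite subsets is unnecessary, since for an arbitrary $S\subseteq M_n(R)$ the intersection $N=\bigcap_{s\in S}\ker s$ is automatically a saturated $R$-submodule of $R^n$ (any intersection of saturated submodules over a domain is saturated), so the PID structure theorem applies directly to $R^n/N$.

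One genuine error, however: the ``alternative shortcut'' you propose is false. Matrix rings over commutative Baer rings need not be Baer; the Baer property is not Morita invariant. For instance $\mathbb{Z}[x]$ is a domain and hence trivially Baer, but for $A=\left(\begin{smallmatrix}2&x\\0&0\end{smallmatrix}\right)\in M_2(\mathbb{Z}[x])$ one has $\ker A=\mathbb{Z}[x]\cdot(-x,2)$ and $\mathbb{Z}[x]^2/\ker A\cong(2,x)$, which is not projective; hence $\ker A$ is not a direct summand of $\mathbb{Z}[x]^2$ and $\mathrm{rann}(A)$ is not generated by an idempotent. The PID hypothesis on $K[x,x^{-1}]$ is doing essential work in your argument, and there is no general theorem letting you bypass it.
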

\begin{proof}
\cite[Theorem 7.55]{Lam} states that for a ring satisfying condition (6) from Theorem \ref{equivalences1-14}, the following conditions are equivalent: being Baer, being right Rickart, and being left Rickart. Since $L(E)$ is Rickart (as a semihereditary ring) and (6) is equivalent to (1), $L(E)$ is Baer.
\end{proof}

Now we turn to the question whether the involution of $L(E)$ extends to $Q(E).$ Recall that a regular ring with involution is said to be \emph{$^\ast$-regular} if the involution is proper (see \cite[Exercise 6A, \S 3]{Be}). Also recall \cite[Theorem 3.1]{Ara_Menal} stating that a $^\ast$-regular ring $R$ must necessarily be finite.

\begin{proposition} If the involution on $K$ is positive definite and if the involution $^\ast$ of $L(E)$ extends from $L(E)$ to $Q(E),$ then this extension is  positive definite and $Q(E)$ is a finite $^\ast$-regular ring.
\label{if_extends_star_regular}
\end{proposition}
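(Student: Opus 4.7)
My plan is to transfer properness and positive definiteness from $L(E)$ up to $Q(E)$ by first exhibiting right-fraction representations in $Q(E)$ via the extended involution, and then clearing denominators. The key starting observation is that since $^*$ is an anti-automorphism of $L(E)$, it preserves regularity: $s \in L(E)$ is a non-zero-divisor iff $s^*$ is. Given any $q \in Q(E)$, the identity $Q(E) = \Qltot(L(E))$ from item (vi) supplies a left-fraction $q^* = s^{-1}t$ with $s$ regular in $L(E)$ and $t \in L(E)$; applying $^*$ yields $q = t^*(s^*)^{-1}$, a right fraction. Hence $L(E)$ is also a right order in $Q(E)$, and the right Ore condition holds on its regular elements.

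To prove positive definiteness, I would assume $\sum_{i=1}^n q_i^* q_i = 0$ in $Q(E)$ and combine the left-fraction representations of the $q_i^*$ via the left Ore condition, producing a regular $s \in L(E)$ and $t_i' \in L(E)$ with $q_i^* = s^{-1}t_i'$ for all $i$. Setting $r := s^*$ and $w_i := (t_i')^* \in L(E)$ gives $q_i = w_i r^{-1}$ with a common right denominator, so
\[ 0 = \sum_i q_i^* q_i = (r^*)^{-1}\Bigl(\sum_i w_i^* w_i\Bigr) r^{-1}, \]
forcing $\sum_i w_i^* w_i = 0$ in $L(E)$. Invoking positive definiteness of $^*$ on $L(E)$ (stemming from that of $K$, in the spirit of \cite[Proposition 2.3]{ARV}) then gives each $w_i = 0$, whence each $q_i = 0$. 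The case $n=1$ is precisely properness of $^*$ on $Q(E)$, which together with the regularity asserted in item (v) makes $Q(E)$ a $^*$-regular ring; finiteness then follows immediately from \cite[Theorem 3.1]{Ara_Menal}.

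The main delicacy I anticipate is bridging the explicitly stated properness of $^*$ on $L(E)$ in \cite[Proposition 2.3]{ARV} and the full positive definiteness needed in the denominator-clearing step. I would handle this either by a direct inspection of ARV's argument, which uses positive definiteness of $K$ and plausibly delivers the stronger conclusion unchanged, or via a matrix device: given $\sum x_i^* x_i = 0$ in $L(E)$, the column $X = (x_1, \dots, x_n)^T$ satisfies $(XX^*)^2 = 0$ in $M_n(L(E))$, and since $M_n(L(E))$ is itself a Leavitt path algebra whose involution matches the conjugate transpose under the standard identification, the ARV result applied to $M_n(L(E))$ forces $XX^* = 0$; reading off the diagonal entries then yields each $x_i = 0$ by one more use of properness on $L(E)$.
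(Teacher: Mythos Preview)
Your argument rests on representing elements of $Q(E)$ as classical left fractions $s^{-1}t$ with $s$ a regular element of $L(E)$, and then invoking the left Ore condition to obtain a common denominator. But $Q(E)=\Qltot(L(E))$ is the \emph{total} left ring of quotients, not the classical one $\Qlcl(L(E))$; there is no reason an arbitrary element of $\Qltot$ admits such a fraction description, and the Ore condition on regular elements is not known to hold at this stage. In fact, the equality $Q(E)=\Qlcl(L(E))$ appears only later in the paper (see condition (29) in Proposition~\ref{extendible_LPAs}) as one of the conditions \emph{equivalent} to $E$ being a finite no-exit graph, and that equivalence is established using the very proposition you are trying to prove (via (15)$\Rightarrow$(17)$\Rightarrow$(1)). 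So your fraction step is circular.

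The paper avoids this by using only the density of $L(E)$ in $Q(E)$ as a left $L(E)$-submodule, a property genuinely available for $\Qltot$: given $p_1,\dots,p_n\in Q(E)$ with $p_1\neq 0$, one finds $r\in L(E)$ with $rp_1\neq 0$ and all $rp_i\in L(E)$, and then the identity $r\bigl(\sum_i p_ip_i^*\bigr)r^*=\sum_i(rp_i)(rp_i)^*$ reduces the question to positive definiteness on $L(E)$. This is the correct substitute for your denominator-clearing step. Incidentally, your worry about bridging properness and positive definiteness on $L(E)$ is unnecessary: the paper invokes \cite[Proposition~2.4]{ARV}, which gives positive definiteness directly (though your matrix workaround via $M_n(L(E))\cong L(M_nE)$ is a valid alternative).
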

\begin{proof}
Let us first show that the involution on $Q(E)$ is positive definite. Assume that for some $n\geq 1$ we have $\sum_{i=1}^n q_{i}^*q_{i}=0$ with $q_{i}\in Q(E)$. Write $p_{i}=q_{i}^*$ so that we have $\sum_{i=1}^n p_{i}p_{i}^*=0$. Assume that there is some $q_{i}$, say $q_{1}$, that is nonzero. In that case $p_{1}$ is nonzero too.

The ring $Q(E)=\Qltot(L(E))$ is a left ring of quotients of $L(E)$ and $L(E)$ is dense in $Q(E)$ as a left $L(E)$-submodule of $Q(E)$ ($L(E)$ embeds into $\Qltot(L(E))$ by \cite[Theorem 4.1]{Stenstrom}). Thus, applying \cite[Exercise 9, p. 284]{Lam} we can find $r\in L(E)$ such that $rp_{1}\neq 0$ and $rp_{i}\in L(E)$ for all $i$.

Now $\sum_{i=1}^n p_{i}p_{i}^*=0$ implies that $0=r(\sum_{i=1}^n p_{i}p_{i}^*)r^*=\sum_{i=1}^n (rp_{i})(rp_{i})^*=0$. But applying \cite[Proposition 2.4]{ARV} we have that $^\ast$ is positive definite in $L(E)$ if the involution on $K$ is positive definite. Therefore we have that $rp_{i}=0$ for all $i$, which is a contradiction with the fact that $rp_{1}\neq 0$. Therefore $q_{i}=0$ for all $i.$ This shows that the involution on $Q(E)$ is positive definite.

For a regular ring to be $^\ast$-regular, it is sufficient for $^\ast$ to be proper. Since $^\ast$ is positive definite on $Q(E),$ $^*$ is proper. Thus, $Q(E)$ is $^\ast$-regular. In addition, $Q(E)$ is finite by \cite[Theorem 3.1]{Ara_Menal}.
\end{proof}

Note that if $E$ is a finite and acyclic graph, $L(E)=Q(E)$ and so the involution extends trivially. This is because $L(E)$ of an acyclic graph is regular by \cite[Theorem 1]{Gene-Ranga}.  A regular ring is equal to its total left (and right) ring of quotients (\cite[Example 1, p. 235]{Stenstrom}). Thus,  $L(E)=Q(E)$ since $Q(E)$ is the total left ring of quotients of $L(E).$

The next example demonstrates that the involution does not always extend from $L(E)$ to $Q(E).$

\begin{example} Let $K$ be any field that has a positive definite involution (for example, complex numbers with conjugated complex involution). Let $E$ be the graph of a single vertex $v$ and two edges $e,f$.
$$\xymatrix{ {\bullet}^{v} \ar@(ul,dl) _{e} \ar@(dr,ur) _{f}   }$$
Then the involution does not extend from $L(E)$ to $Q(E).$
\end{example}
\begin{proof}
If we assume that the involution extends from $L(E)$ to $Q(E),$ $Q(E)$ would be $^\ast$-regular and finite by Proposition \ref{if_extends_star_regular}. But this is not the case since $e^*e=1$ and $ee^*=1-ff^*\neq 1$ because $ff^*\neq0$ (since otherwise $f=f1=ff^*f=0$). So $Q(E)$ is not finite. Hence the involution does not extend from $L(E)$ to $Q(E).$
\end{proof}

Any extension of the involution from $L(E)$ to $Q(E)$ has to be unique as the following proposition shows.
\begin{proposition} If the involution $^\ast$ of $L(E)$ extends from $L(E)$ to $Q(E),$ then this extension is unique.
\end{proposition}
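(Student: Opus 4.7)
The plan is to suppose that $\sigma$ and $\tau$ are two extensions of $^\ast$ from $L(E)$ to $Q(E)$ and to conclude $\sigma = \tau$ by showing that their composition is the identity on $Q(E)$.

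Since each of $\sigma$ and $\tau$ is a $K$-linear anti-automorphism, the composition $\phi = \sigma \circ \tau$ is an honest $K$-algebra automorphism of $Q(E)$. For every $r \in L(E)$, using that $\sigma$ and $\tau$ both restrict to $^\ast$ on $L(E)$ and that $^\ast$ is involutive there, $\phi(r) = \sigma(\tau(r)) = \sigma(r^\ast) = (r^\ast)^\ast = r$. So $\phi$ fixes $L(E)$ pointwise, and the problem reduces to showing that any $K$-algebra automorphism of $Q(E)$ which is the identity on $L(E)$ must be the identity on all of $Q(E)$.

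The key tool is the same density already invoked in the proof of Proposition~\ref{if_extends_star_regular}: since $Q(E) = \Qltot(L(E))$ and $L(E)$ embeds as a left $L(E)$-submodule via the canonical map, for every $q \in Q(E)$ and every nonzero $y \in Q(E)$ there exists $r \in L(E)$ with $rq \in L(E)$ and $ry \neq 0$ (\cite[Exercise 9, p.~284]{Lam}). Suppose, for contradiction, that $\phi(q) \neq q$ for some $q \in Q(E)$. Applying density with the nonzero element $y = \phi(q) - q$ produces $r \in L(E)$ such that $rq \in L(E)$ and $r(\phi(q) - q) \neq 0$. On the one hand, since $\phi$ is a ring homomorphism fixing $L(E)$, we have $\phi(rq) = \phi(r)\phi(q) = r\phi(q)$; on the other, $rq \in L(E)$ gives $\phi(rq) = rq$. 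These two identities force $r\phi(q) = rq$, contradicting the choice of $r$. Hence $\phi = \mathrm{id}_{Q(E)}$, and therefore $\sigma = \tau^{-1} = \tau$.

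The main (and essentially only) obstacle is recognizing that one should form the composition $\sigma \circ \tau$ rather than try to compare $\sigma$ and $\tau$ directly; once this reduction is made, the density of $L(E)$ in its total left ring of quotients $Q(E)$ does all of the work, and no appeal to $^\ast$-regularity or positive definiteness is needed.
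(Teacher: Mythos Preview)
Your proof is correct and follows the same overall strategy as the paper: compose the two putative extensions to obtain a ring automorphism $\phi$ of $Q(E)$ fixing $L(E)$ pointwise, and then argue that $\phi$ must be the identity. The paper carries out this last step using torsion-theoretic language: the difference $\phi - \mathrm{id}$ factors through $Q(E)/L(E)$, which is a torsion left $L(E)$-module (for the torsion theory giving $Q(E)=\Qltot(L(E))$), while $Q(E)$ itself is torsion-free, so the map vanishes. Your argument unwinds this into a direct density computation, exactly as in the proof of Proposition~\ref{if_extends_star_regular}: pick $r\in L(E)$ with $rq\in L(E)$ and $r(\phi(q)-q)\neq 0$, and derive a contradiction from $\phi(rq)=rq$. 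The two arguments are really the same fact viewed from different angles---density is the concrete content of the torsion/torsion-free statements---so your version is a more elementary rendering of the paper's proof rather than a genuinely different route.

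One small inaccuracy worth flagging: you describe $\sigma$ and $\tau$ as ``$K$-linear anti-automorphisms,'' but if the involution on $K$ is nontrivial they are only conjugate-linear. This is harmless, since the composition $\phi=\sigma\circ\tau$ is genuinely $K$-linear, and in any case your argument uses only the ring structure.
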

\begin{proof}
Assume that there are two involutions extending the involution of $L(E)$ to $Q(E).$ Then their composition is a ring automorphism of $Q(E)$ that leaves $L(E)$ fixed. Consider the difference $f$ between this automorphism and the identity map. The map $f$ has $L(E)$ in the kernel and so $f$ factors to a map $\overline{f}: Q(E)/L(E)\rightarrow Q(E)$. We claim that $\overline{f}$ is zero since it maps a torsion module into a torsion-free module. Indeed, $Q(E)$ is torsion-free by \cite[Proposition 1.8, p. 198]{Stenstrom} with respect to the torsion theory that makes $Q(E)$ into the total left ring of quotients of $L(E)$ (more details on this torsion theory can be found in \cite{Stenstrom}). The module $Q(E)/L(E)$ is the cokernel of the monomorphism $L(E)\subseteq Q(E),$ the natural map from $L(E)$ to $Q(E)=\Qltot(L(E)).$  The cokernel $Q(E)/L(E)$ is a torsion module by \cite[Lemma 1.5, p. 196]{Stenstrom}.
Thus, $\overline{f}$ is a map from a torsion to a torsion-free module and so it has to be zero. Hence $f$ is zero as well and so the involution extends uniquely.
\end{proof}

The next result shows that further equivalences can be added to the list of those in Theorem \ref{equivalences1-14} in case when the involution on $K$ is positive definite.

Recall that a $^\ast$-ring $R$ is said to be {\em symmetric} if $1+x^*x$ is invertible for every $x.$ In this case, $R$ has a property that for every idempotent $a$ there is a projection (selfadjoint idempotent) $p$ such that $aR=pR$ (\cite[Exercise 7C, p. 9]{Be}).

\begin{theorem} Let $E$ be a finite graph.
If the involution on $K$ is positive definite, then (15)--(19) are equivalent to (1)--(14).
\begin{itemize}
\item[(15)] The involution $^\ast$ extends from $L(E)$ to $Q(E).$
\item[(16)] $Q(E)$ is $^\ast$-regular (for the involution inherited from $L(E))$.
\item[(17)] $Q(E)$ is finite (for the involution inherited from $L(E))$.
\item[(18)] $L(E)$ is extendible (i.e. $^\ast$ extends to $\Qlmax(L(E))$) and $Q(E)=\Qlmax(L(E))$.
\item[(19)] $Q(E)$ is a symmetric $^\ast$-ring (for the involution inherited from $L(E))$.
\end{itemize}
\label{equivalences15-19}
\end{theorem}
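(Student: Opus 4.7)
The plan is to establish the extended list of equivalences by first showing that (1) implies each of (15)--(19), and then closing the loop by showing that each of (15)--(19) implies some condition already in (1)--(14). The three main ingredients will be the explicit $^\ast$-isomorphism of Proposition \ref{necessary}(ii), Proposition \ref{if_extends_star_regular} (which gives $^\ast$-regularity and finiteness of $Q(E)$ once the involution extends), and \cite[Theorem 3.1]{Ara_Menal}; only the symmetric condition (19) will require the additional input of semisimplicity.

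For (1) $\Rightarrow$ (15) and (18), I use the $^\ast$-algebra
\[ A := \Bigl(\bigoplus_{i=1}^l M_{m_i}(K[x,x^{-1}])\Bigr)\oplus\Bigl(\bigoplus_{j=1}^k M_{n_j}(K)\Bigr) \]
equipped with the conjugate-transpose involution induced by $x\mapsto x^{-1}$ on each $K[x,x^{-1}]$. Since $K[x,x^{-1}]$ is a commutative Ore domain with classical quotient field $K(x)=\Qlmax(K[x,x^{-1}])$, and its involution preserves nonzero elements, it extends uniquely to $K(x)$; passing to matrix rings and direct sums yields
\[ B := \Bigl(\bigoplus_{i=1}^l M_{m_i}(K(x))\Bigr)\oplus\Bigl(\bigoplus_{j=1}^k M_{n_j}(K)\Bigr) \]
with a natural extension of the involution of $A$. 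Because $\Qlmax$ commutes with matrix rings and finite direct sums, $B=\Qlmax(A)$. Combining this with $Q(E)=\Qlmax(L(E))$ from Proposition \ref{necessary}(i) and the $^\ast$-isomorphism $L(E)\cong A$ from Proposition \ref{necessary}(ii), the involution on $B$ transports onto $Q(E)$, giving both (15) and (18); then (16) and (17) follow from Proposition \ref{if_extends_star_regular}.

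For (1) $\Rightarrow$ (19), Theorem \ref{equivalences1-14} shows $Q(E)$ is semisimple via (12), and the extended involution is positive definite by Proposition \ref{if_extends_star_regular}. Given $q\in Q(E)$, if $(1+q^\ast q)y=0$ then $y=-q^\ast q y$, hence $y^\ast y+(qy)^\ast(qy)=0$; positive definiteness forces $y=0$, and an analogous argument handles the right annihilator. In a semisimple ring an element with trivial left and right annihilators is invertible, so $1+q^\ast q$ is a unit, yielding (19).

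For the converses, (18) $\Rightarrow$ (15) and (19) $\Rightarrow$ (15) are immediate from the definitions, since both (18) and (19) presuppose that the involution is defined on $Q(E)$. Proposition \ref{if_extends_star_regular} gives (15) $\Rightarrow$ (16), (17). It then suffices to observe that (16) implies direct finiteness of $Q(E)$ via \cite[Theorem 3.1]{Ara_Menal}, and this is condition (8), closing the cycle through Theorem \ref{equivalences1-14}. The main obstacle is (1) $\Rightarrow$ (19): positive definiteness alone only shows that $1+q^\ast q$ has trivial annihilators, and the additional structure of semisimplicity of $Q(E)$ is essential to upgrade this to genuine invertibility.
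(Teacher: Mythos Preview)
Your argument is essentially correct and close to the paper's, but there is one genuine slip in how you close the cycle. You write that ``(16) implies direct finiteness of $Q(E)$ via \cite[Theorem 3.1]{Ara_Menal}, and this is condition (8)''. However, \cite[Theorem 3.1]{Ara_Menal} only asserts that a $^\ast$-regular ring is \emph{finite} in the $^\ast$-sense (i.e.\ $x^\ast x=1\Rightarrow xx^\ast=1$), which is condition (17), not direct finiteness (8). The implication ``$^\ast$-regular $\Rightarrow$ directly finite'' is exactly (part of) Handelman's conjecture, still open in general, so you cannot invoke it here. The fix is immediate: you already have (15) $\Rightarrow$ (17) via Proposition \ref{if_extends_star_regular}, and (17) passes to the subring $L(E)$ to give (10), whence (1) by Proposition \ref{sufficient}. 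This is precisely the paper's route (16) $\Rightarrow$ (17) $\Rightarrow$ (1).

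Your treatment of (19) differs from the paper's in both directions, and the comparison is instructive. For (1) $\Rightarrow$ (19) the paper quotes the general fact from \cite[Exercise 9A, p.~232]{Be} that any $^\ast$-regular ring with $2$-proper involution is symmetric, while you argue directly that $1+q^\ast q$ has trivial annihilators by positive definiteness and then invoke semisimplicity (condition (12)) to conclude invertibility. Your argument is self-contained but leans on the special structure of $Q(E)$; the paper's is a one-line citation that works for arbitrary $^\ast$-regular rings. For (19) $\Rightarrow$ (1)--(14), the paper shows (19) $\Rightarrow$ (16) via ``symmetric $+$ regular $\Rightarrow$ $^\ast$-regular'' \cite[Exercises 6A, 7A, p.~18]{Be}, whereas you simply observe that the statement of (19) presupposes the involution already lives on $Q(E)$, hence (15). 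Your shortcut is legitimate given how (19) is phrased.
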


\begin{proof}
(5) $\Rightarrow$ (15). Let $\phi$ be the map from $L(E)$ onto $R=\left(\bigoplus_{i=1}^l M_{m_i}(K[x, x^{-1}])\right)\oplus$ $\left(\bigoplus_{j=1}^{k}M_{n_j}(K)\right)$ as in (5). This isomorphism induces the isomorphism (we call it also $\phi$) of the maximal left rings of quotients $Q(E)=\Qlmax(L(E))$ (see Proposition \ref{necessary})
onto $\Qlmax(R)=\left(\bigoplus_{i=1}^l M_{m_i}(K(x))\right)\oplus\left(\bigoplus_{j=1}^{k}M_{n_j}(K)\right).$ For an element $q$ of $Q(E)$ define the involution by $q^*=\phi^{-1}(\phi(q)^*).$ This is well defined by Proposition \ref{necessary}
and defines an involution on $Q(E)$ that extends the one on $L(E)$ also by Proposition \ref{necessary}.

The implication (15) $\Rightarrow$ (16) is Proposition \ref{if_extends_star_regular}.

The implication (16) $\Rightarrow$ (17) is \cite[Theorem 3.1]{Ara_Menal}.

The implication (17) $\Rightarrow$ (1) follows from Proposition \ref{sufficient} since if $Q(E)$ is finite, then $L(E)$ is finite as well.

(15) $\Leftrightarrow$ (18). Note that (1) implies that $Q(E)=\Qlmax(L(E))$ (this fact is in the proof of Proposition \ref{necessary}). Since (15) + (1) implies (18) and (15) implies (1), we have that (15) $\Rightarrow$ (18). Finally, (18) clearly implies (15).

(15) $\Rightarrow$ (19). A $^\ast$-regular ring with 2-proper ($x^*x+y^*y=0$ implies $x=y=0$) involution is symmetric (\cite[Exercise 9A p. 232]{Be}). Thus, a $^\ast$-regular ring with positive definite involution is symmetric. Condition (15) and Proposition \ref{if_extends_star_regular} imply that $Q(E)$ is a $^\ast$-regular ring with positive definite involution.

(19) $\Rightarrow$ (16). A $^\ast$-ring that is symmetric and regular is $^\ast$-regular (since symmetric and Rickart implies Rickart $^*$, and Rickart $^*$ and regular implies $^*$-regular, \cite[Exercises 6A, 7A p. 18]{Be}).
\end{proof}

\begin{remark} Conditions (1) -- (14) imply that $Q(E)$ is Baer (as any left or right self-injective and regular ring is Baer, see \cite[Corollary 7.53]{Lam}). Conditions (15) -- (18) imply that $Q(E)$ is Baer $^\ast$-ring (since $^\ast$-regular and Baer ring is regular Baer $^\ast$-ring).
\end{remark}

\begin{example} If $E$ is a finite no-exit graph, $Q(E)$ does not have to be symmetric if the involution of $K$ is not positive definite. For example, if $E$ is a loop (i.e. graph with one vertex and one edge), then $L_{\Zset_2}(E)=\Zset_2[x,x^{-1}]$ (\cite[Example 1.4]{Gene_Gonzalo1}) and $Q_{\Zset_2}(E)=\Zset_2(x)$. This algebra is not symmetric because $1+xx^*=1+xx^{-1}=1+1=0$ is not invertible.

Also, the involution on  $Q_{\Zset_2}(E)$ is not proper so $Q_{\Zset_2}(E)$ is not $^\ast$-regular. Thus we see that (1)--(14) hold but not (16), for example. So, this example shows that condition (15)--(18) are not necessarily equivalent with (1)--(14) if the involution of the field $K$ is not positive definite.
\end{example}

Theorem \ref{equivalences15-19} extends the class of known rings for which the Handelman's conjecture holds. Recall that the Handelman's conjecture (\cite[Problem 48, p. 380]{G}) is asking if every $^\ast$-regular ring is directly finite and if it is unit-regular. In \cite[Corollary 4.2]{ARV} it was shown that every  $^\ast$-regular Leavitt path algebra is unit-regular. From this result also follows that a $^\ast$-regular Leavitt path algebra is directly finite (since a unit-regular ring is directly finite by \cite[Proposition 5.2]{G}.)

Theorem \ref{equivalences15-19} asserts that the Handelman's conjecture holds for the class of regular algebras of Leavitt path algebras: in case when the involution on $K$ is positive definite and $Q(E)$ is equipped with the involution originating from $L(E),$ the regular algebra $Q(E)$ is $^\ast$-regular and both directly finite and unit-regular.

\section{$Q(E)$ as a ring of quotients}
\label{section_symmetric}

In this section, we explore the properties of $Q(E)$ as both one-sided and symmetric ring of quotients. Let us recall a few facts on rings of quotients first. Let $\ef_r$ be a right Gabriel filter (i.e. a set of right ideals of a ring $R$ that defines a hereditary torsion theory $\tau$, see \cite{Stenstrom} for more details). The right ring of quotients of $R$ with respect to $\ef_r$ (and $\tau$) is denoted by $R_{\ef_r}.$ This ring can be represented as $\dirlim_{I\in\ef_r}\homo(I, \frac{R}{\te(R)})$ where $\te(R)$ is the torsion submodule of $R$. Equivalently, $q$ is an element of $R_{\ef_r}$ if there is a right ideal $I\in \ef_r$ and a right module homomorphism $f:I\rightarrow \frac{R}{\te(R)}$ such that $f(x)=qx$ for every $x\in I.$ A left-sided version is defined similarly.

Now consider an involutive ring $R$. We define a left Gabriel filter $\ef_l$ and a right Gabriel filter $\ef_r$ to be {\em conjugated} if and only if $\ef_l^*=\ef_r$ (i.e. $I\in \ef_r$ if and only if $I^*=\{r^*\ |\ r\in I\}\in \ef_l).$

\begin{proposition}
Let $R$ be an involutive ring and $\ef_l$ and $\ef_r$ conjugated left and right Gabriel filters. If the involution extends to $R_{\ef_r}$ then $R_{\ef_r}$ is also a left ring of quotients and $_{\ef_l}R=R_{\ef_r}.$ Similarly, if it extends to $_{\ef_l}R,$ then $_{\ef_l}R$ is a right ring of quotients and $_{\ef_l}R=R_{\ef_r}.$
\label{right_is_symmetric}
\end{proposition}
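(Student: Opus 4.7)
The guiding idea is that the involution on $R$ interchanges left and right module structures, and the conjugacy $\ef_r=\ef_l^*$ says the two Gabriel filters match under this interchange. Once the involution extends to $T:=R_{\ef_r}$, every right-sided statement about $T$ can be mirrored on the left, collapsing the distinction between left and right rings of quotients. The plan is first to verify the left-sided quotient property pointwise, and then to exhibit a canonical ring isomorphism $T\cong{}_{\ef_l}R$ that is the identity on $R$.

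First I would show that $T$ is a left ring of quotients of $R$ with respect to $\ef_l$, i.e.\ that $(R:t)_l:=\{r\in R:rt\in R\}$ lies in $\ef_l$ for every $t\in T$. The key observation is the identity $((R:t^*)_r)^*=(R:t)_l$, which holds in general: for $r\in R$, $t^*r\in R$ if and only if $r^*t=(t^*r)^*\in R$. Since $t^*\in T$ by hypothesis, $(R:t^*)_r\in\ef_r$ by the right-quotient property of $T$, and conjugacy of the filters yields $(R:t)_l\in\ef_l$, as required.

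Next I would exhibit the isomorphism $\phi:T\to{}_{\ef_l}R$ sending $t$ to the class of the left $R$-module map $\ell_t:(R:t)_l\to R/\te(R)$, $r\mapsto rt$. That $\phi$ is a ring homomorphism fixing $R$ is a routine check on common left ideals in $\ef_l$. Injectivity reduces to $\ef_r$-torsion-freeness of $T$ as a right $R$-module: if $Kt=0$ for some $K\in\ef_l$, then $t^*K^*=0$ with $K^*\in\ef_r$, so $t^*=0$ and hence $t=0$. For surjectivity, given $s\in{}_{\ef_l}R$ represented by a left module hom $g:J\to R/\te(R)$ with $J\in\ef_l$, I would define $g^*:J^*\to R/\te(R)$ by $g^*(y):=g(y^*)^*$, verify that $g^*$ is a right $R$-module map on $J^*\in\ef_r$ (using that $^*$ reverses products), and let $p\in T$ be the element it represents. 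The identity $py=g^*(y)$, when starred, yields $\ell_{p^*}(r)=g(r)$ for all $r\in J$, so $\phi(p^*)=s$; crucially, $p^*$ lies in $T$ precisely because the involution extends.

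The main obstacle is the surjectivity step, which is where the extension-of-involution hypothesis is used in an essential way: without it, the candidate preimage $p^*$ need not lie in $T$, and $\phi$ could genuinely fail to be onto, leaving ${}_{\ef_l}R$ possibly strictly larger than $T$. Once the first half of the proposition is established, the second half is formally identical with the roles of left and right interchanged throughout, so the argument is self-dual.
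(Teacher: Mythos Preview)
Your proposal is correct and rests on the same core mechanism as the paper: transporting a right $R$-homomorphism $f:I\to R/\te_r(R)$ to the left $R$-homomorphism $f^*:I^*\to R/\te_l(R)$, $x\mapsto f(x^*)^*$, using the conjugacy $\ef_l^*=\ef_r$. The paper first observes that $\te_l(R)^*=\te_r(R)$ and then argues both ``inclusions'' directly at the level of homomorphism representatives: if $q\in R_{\ef_r}$, represent $q^*$ (which lies in $R_{\ef_r}$ by hypothesis) by some $f$, and then $f^*$ exhibits $q$ as an element of ${}_{\ef_l}R$; conversely, if $q\in{}_{\ef_l}R$ is represented by $f$, then $f^*$ represents an element $q^*\in R_{\ef_r}$, and the extension hypothesis gives $q=(q^*)^*\in R_{\ef_r}$.

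Your organization is slightly different: you first isolate the left-denominator property via the identity $((R:t^*)_r)^*=(R:t)_l$, and then set up an explicit map $\phi:T\to{}_{\ef_l}R$ whose injectivity you check via torsion-freeness and whose surjectivity is exactly the paper's converse inclusion. This is more explicit about why the two a~priori distinct rings can be compared at all, which the paper leaves implicit in its element-wise argument. One small point: you write $R/\te(R)$ uniformly, but the target on the left side is $R/\te_l(R)$ and on the right $R/\te_r(R)$; these need not be equal, only interchanged by the involution, which is precisely what the paper records before starting. With that caveat, the arguments are equivalent.
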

\begin{proof}
Let $\tau_r=(\te_r,\f_r)$ and $\tau_l=(\te_l,\f_l)$ denote the torsion theories of right and left $R$-modules that $\ef_r$ and $\ef_l$ determine. Note that the involution maps the left torsion submodule $\te_l(R)$ onto the right torsion submodule $\te_r(R).$ This is because $r\in\te_r(R)$ if and only if $rI=0$ for some right ideal in $\ef_r.$ But then $I^*r^*=0$ and $I^*$ is in $\ef_l$ and so $r^*\in\te_l(R).$ Similarly, $r\in\te_l(R)$ implies that $r^*\in\te_r(R).$

Let now $q$ be in $R_{\ef_r}.$ Since the involution extends to $R_{\ef_r},$ $q^*$ is in $R_{\ef_r}$ as well. So, there is a right ideal $I\in\ef_r$ and a right $R$-homomorphism $f:I\rightarrow R/\te_r(R)$ such that $f(x)=q^*x.$ Then $I^*$ is a left ideal and the map $f^*:I^*\rightarrow R/\te_l(R)$ defined by $f^*(x)=f(x^*)^*$ is such that $f^*(x)=xq.$ Thus $q$ is in $_{\ef_l}R.$ The converse is proven similarly: if $q$ is in $_{\ef_l}R$ and $I$ a left ideal with a homomorphism $f:I\rightarrow R/\te_l(R)$ such that $f(x)=xq,$ then $I^*$ is a right ideal and $f^*:I^*\rightarrow R/\te_r(R)$ given by $f^*(x)=f(x^*)^*$ is such that $f^*(x)=q^*x.$ Thus $q^*$ is in $R_{\ef_r}.$ But then $q=(q^*)^*$ is in $R_{\ef_r}$ as well.
\end{proof}

If $\ef_l$ and $\ef_r$ are left and right Gabriel filters, the symmetric filter $_l\ef_r$ induced by $\ef_l$ and $\ef_r$ is defined to be the set of (two-sided) ideals of $R$ containing ideals of the form $IR+RJ$, where $I\in\ef_l$ and $J\in \ef_r$ (equivalently, the set of right ideals of $R\otimes_{\Zset}R^{op}$ containing ideals of the form $J\otimes R^{op}+R\otimes I$).
This defines a Gabriel filter by \cite[p. 100]{Ortega_thesis}. The corresponding torsion theory induced by $\tau_l$ and $\tau_r$ is denoted by  $_l\tau_r.$ If $M$ is an $R$-bimodule, $\te_l(M),$ $\te_r(M)$ and $_l\te_r(M)$ torsion submodules of $M$ for $\tau_l,$ $\tau_r$ and  $_l\tau_r$ respectively, then $_l\te_r(M)=\te_l(M)\cap\te_r(M).$

In \cite{Ortega_thesis} and \cite{Ortega_paper}, Ortega considers the symmetric ring of quotients $_{\ef_l}R_{\ef_r}$ (or $_lR_r$ for short) with respect to $_l\ef_r$ to be \[_lR_r=\dirlim_{K\in _l\ef_r}\;  \homo(K, \frac{R}{_l\te_r(R)})\]
where the homomorphisms in the formula are $R$-bimodule homomorphisms. Ortega shows that an equivalent approach can be obtained considering compatible pairs of homomorphisms. If $I\in\ef_l,$ $J\in\ef_r,$ $f:I\rightarrow\, \overline{R}$ and $g:J\rightarrow \overline{R}$ are homomorphisms with $\overline{R}=R/_l\te_r(R),$ then $(f,g)$ is a {\em compatible pair} if $f(i)j=ig(j)$ for all $i\in I,$ $j\in J.$ Define the equivalence relation by $(f,g)\sim(h,k)$ if and only if $f|_{I'}=h|_{I'}$ for some $I'\in \ef_l$ and $g|_{J'}=k|_{J'}$ for some $J'\in \ef_r.$ Then there is a bijective correspondence between elements of $_lR_r$ and the equivalence classes of compatible pairs of homomorphisms (\cite[Proposition 4.37]{Ortega_thesis} or \cite[Proposition 1.4]{Ortega_paper}) such that an element $q$ of $_lR_r$ can be represented by $(f,g)$ $f:I\rightarrow\overline{R}$ and $g:J\rightarrow\overline{R}$ if $f(i)=iq$ and $g(j)=qj$ for all $i\in I$ and $j\in J.$ In \cite{Ortega_thesis} and \cite{Ortega_paper}, Ortega further defines the symmetric module of quotients $_{\ef_l}M_{\ef_r}$ with respect to left and right Gabriel filters $\ef_l$ and $\ef_r$ of an $R$-bimodule $M$.

We prove the following property of symmetric rings of quotients of involutive rings.
\begin{proposition} Let $R$ be a ring with involution and $\ef_l$ and $\ef_r$ left and right Gabriel filters conjugated to each other.
\begin{enumerate}
\item[(i)] The involution extends to the symmetric ring of quotients $_lR_r.$

\item[(ii)] If $R$ is $\tau_r$-torsion-free ($\tau_l$-torsion-free) and the involution extends to $R_{\ef_r}$ ($_{\ef_l}R$)
then $R_{\ef_r}=\,_{\ef_l}R=\, _lR_r.$
\end{enumerate}
\label{involution_extends_to_symmetric}
\end{proposition}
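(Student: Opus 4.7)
The plan is to prove (i) by explicitly constructing an involution on $_lR_r$ via the compatible-pair description, and to prove (ii) by combining Proposition \ref{right_is_symmetric} with mutually inverse ring homomorphisms between $R_{\ef_r}$ and $_lR_r$.

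For part (i), I first observe that the involution interchanges $\te_l(R)$ and $\te_r(R)$ (a fact established inside the proof of Proposition \ref{right_is_symmetric}), so $_l\te_r(R) = \te_l(R) \cap \te_r(R)$ is stable under $^*$ and $\overline{R} := R/{}_l\te_r(R)$ inherits an involution. The symmetric filter $_l\ef_r$ is stable under $^*$ as well: if $K \supseteq IR + RJ$ with $I \in \ef_l$ and $J \in \ef_r$, then $K^* \supseteq RI^* + J^*R$ with $I^* \in \ef_r$ and $J^* \in \ef_l$. Given $q \in {}_lR_r$ represented by a compatible pair $(f\colon I \to \overline{R},\; g\colon J \to \overline{R})$ with $I \in \ef_l$ and $J \in \ef_r$, I define $q^*$ by the pair $(g^*, f^*)$, where $g^*(x) := g(x^*)^*$ on $J^*$ and $f^*(y) := f(y^*)^*$ on $I^*$. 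A direct check shows $g^*$ is left $R$-linear, $f^*$ is right $R$-linear, and the compatibility $g^*(x)y = x f^*(y)$ is the starred form of the identity $f(i)j = ig(j)$. Well-definedness on equivalence classes of compatible pairs, additivity, and the identity $(q^*)^* = q$ are then immediate. For the anti-multiplicativity $(q_1 q_2)^* = q_2^* q_1^*$, I would pass to the direct-limit description $_lR_r = \dirlim_{K \in {}_l\ef_r}\homo(K, \overline{R})$ of bimodule homomorphisms and define the involution on the limit by $[\alpha] \mapsto [\alpha^*]$ with $\alpha^*(x) = \alpha(x^*)^*$; since representatives can be chosen on a cofinal system of $^*$-stable ideals, the anti-homomorphism property reduces to starring both sides of the defining product identity in $\overline{R}$.

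For part (ii), assume $R$ is $\tau_r$-torsion-free and $^*$ extends to $R_{\ef_r}$. Since the involution interchanges $\te_l(R)$ and $\te_r(R)$, we also get $\te_l(R) = 0$, hence $_l\te_r(R) = 0$ and $\overline{R} = R$. Proposition \ref{right_is_symmetric} yields $S := R_{\ef_r} = {}_{\ef_l}R$. For $q \in S$, choose $f\colon I \to R$ with $f(i) = iq$ ($I \in \ef_l$) and $g\colon J \to R$ with $g(j) = qj$ ($J \in \ef_r$); compatibility $f(i)j = iqj = ig(j)$ is automatic, so $(f, g)$ represents an element $\phi(q) \in {}_lR_r$. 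Independence from the choices of $I, J, f, g$ is routine (different choices agree on intersections, which remain in the appropriate filters), producing a ring homomorphism $\phi\colon S \to {}_lR_r$. Going the other way, sending $[(f, g)] \in {}_lR_r$ to the element of $R_{\ef_r}$ determined by $g$ defines $\psi\colon {}_lR_r \to S$. Then $\psi\phi = \mathrm{id}$ is obvious, while $\phi\psi = \mathrm{id}$ follows because $R_{\ef_r}$ is $\tau_r$-torsion-free, so two elements of $S$ that agree on some common $J \in \ef_r$ must coincide. The parallel statement (involution extending to $_{\ef_l}R$ with $R$ being $\tau_l$-torsion-free) is proved by a symmetric verbatim argument.

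The main obstacle I foresee is the anti-multiplicativity in part (i): the product of two compatible pairs involves composing homomorphisms defined a priori on different ideals, and the cleanest verification of $(q_1 q_2)^* = q_2^* q_1^*$ is to work in the direct-limit picture, where $_l\ef_r$-cofinality permits replacing arbitrary representatives by ones defined on a common $^*$-stable ideal, at which point the identity collapses to a single line of algebra in $\overline{R}$.
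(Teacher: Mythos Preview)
Your proposal is correct and follows essentially the same route as the paper: for (i) you define $q^*$ via the conjugated pair $(g^*,f^*)$ exactly as the paper does, and for (ii) you use Proposition \ref{right_is_symmetric} together with the two maps $[(f,g)]\mapsto [g]$ and $q\mapsto [(f,g)]$ between $_lR_r$ and $R_{\ef_r}$, just as the paper does. If anything, you are slightly more scrupulous than the paper in two places: you flag the anti-multiplicativity of the induced involution in (i) (which the paper does not verify explicitly) and you check that the two maps in (ii) are mutually inverse rather than merely asserting an embedding each way.
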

\begin{proof}
First note that $_l\te_r(R)^*=(\te_l(R)\cap\te_r(R))^*=\te_r(R)\cap\te_l(R)=\,_l\te_r(R)$ if $\ef_l$ and $\ef_r$ are conjugated to each other.

(i) Let $(f,g)$ be a compatible pair representing the equivalence class of an element $q$ of the symmetric ring of quotients $_lR_r.$ Let $\overline{R}=R/_l\te_r(R),$ $I\in\ef_l,$ $J\in\ef_r,$ and $f:I\rightarrow\, \overline{R}$ and $g:J\rightarrow \overline{R}$ be homomorphisms with $f(i)=iq$ and $g(j)=qj.$  We can assume that $J=I^*$ (otherwise, we can replace the pair $(f,g)$ by an equivalent pair of homomorphisms defined on a left ideal generated by $I\cup J^*$ and a right ideal generated by $I^*\cup J$).

To define $q^*,$ consider a pair $(g^*, f^*)$ with $g^*:I\rightarrow \overline{R}$ and $f^*:I^*\rightarrow \overline{R}$ defined by $g^*(i)=g(i^*)^*$ and $f^*(j)=f(j^*)^*.$ It is a compatible pair since $g^*(i)j=g(i^*)^*j=(j^*g(i^*))^*=(f(j^*)i^*)^*=if(j^*)^*=if^*(j).$ This compatible pair represents the element $q^*$ since
$g^*(i)=g(i^*)^*=(qi^*)^*=iq^*$ and $f^*(j)=f(j^*)^*=(j^*q)^*=q^*j.$

(ii) Note that $ _lR_r$ embeds in $R_{\ef_r}$ by sending the class of a compatible pair $(f,g)$ to the equivalence class of $g.$ If the involution extends to $R_{\ef_r},$ then $R_{\ef_r}=\,_{\ef_l}R$ by Proposition \ref{right_is_symmetric}.

If $R$ is $\tau_r$-torsion-free, $\te_r(R)=0$ and so $\te_l(R)=(\te_r(R))^*=0$ and $_l\te_r(R)=\te_l(R)\cap\te_r(R)=0$ also.
We claim that $R_{\ef_r}$ embeds into $ _lR_r.$ To see this, let $q\in R_{\ef_r}$ be represented by $g:J\rightarrow R$ for some $J\in\ef_r.$ Since $q$ is also in $_{\ef_l}R,$ there is a left ideal $I$ and a map $f:I\rightarrow R$ that represents $q$ as an element of $_{\ef_l}R.$ Then $(f, g)$ is a compatible pair representing $q$ as an element of $_lR_r$ since
$f(i)j=(iq)j=i(qj)=ig(j).$
\end{proof}

If $\ef_l$ is the filter of dense left and $\ef_r$ a filter of dense right ideals, the symmetric ring of quotients induced by $\ef_l$ and $\ef_r$ is the maximal symmetric ring of quotients  $\Qmax(R)$ (introduced in \cite{Utumi}, studied in \cite{Lanning} and \cite{Ortega_paper_Qsimmax}). In particular, the classes of dense left and right ideals are conjugated so Proposition \ref{right_is_symmetric} and Proposition \ref{involution_extends_to_symmetric} can be applied to these filters.

The following lemma shows that perfect symmetric rings of quotients of involutive rings are also obtained via conjugated filters.
\begin{lemma} If $R$ is an involutive ring and $S$ a perfect symmetric ring of quotients with an injective localization map $q:R\rightarrow S$, then the involution extends to $S$ making $q$ a $^*$-homomorphism. With this involution, the left filter  $\ef_l=\{I | Sq(I)=S\}$ is conjugated to the right filter $\ef_r=\{J | q(J)S=S\}.$
\label{symmetric_perfect_have_involution}
\end{lemma}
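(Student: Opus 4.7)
My plan is to prove both assertions together by constructing the extension $\widetilde{\ast}$ on $S$ via the opposite ring $S^{\mathrm{op}}$, exploiting the fact that an anti-homomorphism into $S$ becomes a genuine homomorphism into $S^{\mathrm{op}}$. Specifically, the map $\sigma\colon R\to S^{\mathrm{op}}$ defined by $\sigma(r)=q(r^{\ast})$ is a ring homomorphism (the anti-multiplicativity of $\ast$ is cancelled by the reversal in $S^{\mathrm{op}}$), and it is injective since $q$ and $\ast$ are.

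First I would verify that $(S^{\mathrm{op}},\sigma)$ is itself a perfect symmetric ring of quotients of $R$: $\sigma$ is a ring epimorphism (factoring as $R\xrightarrow{\ast}R^{\mathrm{op}}\xrightarrow{q^{\mathrm{op}}}S^{\mathrm{op}}$, an isomorphism followed by an epimorphism), $S^{\mathrm{op}}$ is flat as a left and right $R$-module via $\sigma$ (by transferring the flatness of $_R S$ and $S_R$ through the anti-isomorphism $\ast$), and $S^{\mathrm{op}}\otimes_R S^{\mathrm{op}}\cong S^{\mathrm{op}}$ is inherited from $S\otimes_R S\cong S$. A direct computation of the Gabriel filters associated with $\sigma$ yields the left filter $\{I : I^{\ast}\in\ef_r\}$ and the right filter $\{J : J^{\ast}\in\ef_l\}$.

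The key remaining step is to obtain an isomorphism $\widetilde{\ast}\colon S\to S^{\mathrm{op}}$ over $R$, i.e., satisfying $\widetilde{\ast}\circ q=\sigma$. Because a perfect (flat epimorphic) ring of quotients of $R$ is determined up to unique isomorphism by its Gabriel filter, this reduces to the equality of the filters of $q$ and $\sigma$, namely the conjugation condition $\ef_l^{\ast}=\ef_r$. To establish this equality, I would argue that in the perfect symmetric case the one-sided filters are determined by the symmetric two-sided filter of ideals $K$ for which $Sq(K)=q(K)S=S$; since $K\mapsto K^{\ast}$ is a bijection on two-sided ideals and both $(S,q)$ and $(S^{\mathrm{op}},\sigma)$ realize the same symmetric perfect data, the symmetric filter must be $\ast$-invariant, which in turn forces the one-sided filters to be conjugated. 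The canonical isomorphism $\widetilde{\ast}$ then exists.

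Viewed as an endomorphism of $S$, $\widetilde{\ast}$ is a ring anti-automorphism extending $\ast$. It is an involution because $\widetilde{\ast}^{\,2}$ restricts to $\mathrm{id}_R$ on $q(R)\subseteq S$, and by the uniqueness of extensions over the ring epimorphism $q$, this forces $\widetilde{\ast}^{\,2}=\mathrm{id}_S$. That $q$ is a $\ast$-homomorphism holds by construction, and conjugation of the filters is precisely the equality obtained in the main step. The principal obstacle I anticipate is the technical argument that the perfect symmetric structure intrinsically produces a $\ast$-invariant symmetric filter; once this is in place, everything else is a routine unwinding of universal properties.
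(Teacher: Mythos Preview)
Your approach has a genuine gap precisely at the point you flag as ``the principal obstacle,'' and the proposed resolution does not close it. To obtain the $R$-isomorphism $\widetilde{\ast}\colon S\to S^{\mathrm{op}}$ via uniqueness of perfect localizations you need the left filter of $\sigma$, which you correctly compute as $\{I:I^{\ast}\in\ef_r\}$, to equal $\ef_l$; this is exactly the conjugation statement $\ef_l^{\ast}=\ef_r$ that the lemma asserts. Your plan to deduce it from $\ast$-invariance of the symmetric two-sided filter does not work: the symmetric filter $_l\ef_r$ is \emph{induced from} the pair $(\ef_l,\ef_r)$ (as ideals containing some $IR+RJ$), not conversely, so it does not determine the one-sided filters. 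Worse, checking that $_l\ef_r$ is $\ast$-invariant amounts to showing that $K\supseteq IR+RJ$ implies $K^{\ast}\supseteq I'R+RJ'$ for some $I'\in\ef_l$, $J'\in\ef_r$; since $K^{\ast}\supseteq J^{\ast}R+RI^{\ast}$, you are back to needing $J^{\ast}\in\ef_l$ and $I^{\ast}\in\ef_r$, i.e.\ the conjugation you set out to prove. The argument is circular.

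The paper's proof reverses the logical order and thereby avoids the circularity. It works directly with the description of $S$ as equivalence classes of compatible pairs $(f\colon I\to R,\ g\colon J\to R)$ and makes one concrete observation: for any $I\in\ef_l$ and $J\in\ef_r$, upward closure gives $I+J^{\ast}\in\ef_l$ and $I^{\ast}+J\in\ef_r$, and these two ideals are conjugate to one another. Hence every $s\in S$ admits a representing pair on \emph{conjugate} domains, and the involution is then defined element-by-element by $(f,g)\mapsto(g^{\ast},f^{\ast})$, whose domains are again the same conjugate pair and therefore lie in $(\ef_l,\ef_r)$. Once $\widetilde{\ast}$ is thus constructed on $S$, the conjugation of the filters is an immediate consequence: applying $\widetilde{\ast}$ to $Sq(I)=S$ yields $q(I^{\ast})S=S$. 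Your abstract uniqueness strategy is elegant, but it cannot start without the filter equality, whereas the paper earns that equality as a corollary of the hands-on construction.
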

\begin{proof} If  $S$ a perfect symmetric ring of quotients with localization map $q$, then $S$ is a symmetric ring of quotients with respect to the torsion theory induced by the left filter $\ef_l=\{I | Sq(I)=S\}$ and the right filter  $\ef_r=\{J | q(J)S=S\}$
by \cite[Theorem 4.1]{Lia_symmetric}. Note that if $I\in \ef_l,$ and $J\in \ef_r,$ then the left ideal $I+J^*$ is in $\ef_l$ and the right ideal $I^*+J$ is in $\ef_r.$ So, a compatible pair $(f,g)$ such that $f:I\rightarrow R$ and $g: J\rightarrow R$ can be exchanged by a compatible pair equivalent to it (call it $(f,g)$ again) defined on two ideals conjugated to each other. For $s\in S$ represented by such $(f,g),$ we can define $s^*\in S$ by representing it with the compatible pair $(g^*, f^*)$ defined in the same way as in the proof of Proposition \ref{involution_extends_to_symmetric}.

This definition makes $q$ a $^*$-homomorphism: for $r\in R$ the image $q(r)$ can be represented by a compatible pair $(R_r, L_r)$ where $L_r$ is the left and $R_r$ is the right multiplication by $r.$ Then the element $q(r)^*$ is represented by $(L_r^*, R_r^*)$ which is exactly $(R_{r^*}, L_{r^*})$ that represents $q(r^*).$ Thus $q(r^*)=q(r)^*$ for all $r\in R.$

With such involution on $S$, it is easy to see that $I\in \ef_l$ if and only if $I^*\in \ef_r.$ So, the filters are conjugated.
\end{proof}

In \cite{Lia_symmetric}, the total symmetric ring of quotients $\Qtot$ is defined as a symmetric version of the total one-sided rings of quotients $\Qrtot$ and $\Qltot.$ From Lemma \ref{symmetric_perfect_have_involution}, it follows that the involution extends to the total symmetric ring of quotients $\Qtot(R).$ Note that $R$ always embeds into $\Qtot(R)$ (\cite[Theorem 5.1]{Lia_symmetric}) so the assumptions of Lemma \ref{symmetric_perfect_have_involution} are satisfied. Also $R$ also embeds in $\Qrtot(R),$ $\Qltot(R)$ (see \cite[Theorem 4.1]{Stenstrom}) so Proposition \ref{right_is_symmetric} and Proposition \ref{involution_extends_to_symmetric} can be applied as well. Thus, the following corollary holds.

\begin{corollary} Let $R$ be an involutive ring.
\begin{enumerate}
\item[(i)] The involution extends to $\Qmax(R)$ and $\Qtot(R).$

\item[(ii)] If the involution extends to $\Qrmax(R)$ or $\Qlmax(R),$ then $\Qrmax(R)=\Qlmax(R)=\Qmax(R).$

\item[(iii)] If the involution extends to $\Qrtot(R)$ or $\Qltot(R),$ then $\Qrtot(R)=\Qltot(R)=\Qtot(R).$
\end{enumerate}
\label{quotients_of_involutive_ring}
\end{corollary}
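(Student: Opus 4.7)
The plan is, in each of the three parts, to identify the relevant left and right Gabriel filters as being conjugated under the involution, and then to combine Proposition \ref{right_is_symmetric}, Proposition \ref{involution_extends_to_symmetric}, and Lemma \ref{symmetric_perfect_have_involution}. The common observation enabling this is that $R$ embeds into each of $\Qrmax(R)$, $\Qlmax(R)$, $\Qmax(R)$, $\Qrtot(R)$, $\Qltot(R)$, and $\Qtot(R)$ (by \cite[Theorem 4.1]{Stenstrom} and \cite[Theorem 5.1]{Lia_symmetric}), which supplies both the injective localization map needed for Lemma \ref{symmetric_perfect_have_involution} and the $\tau_r$-torsion-freeness required by Proposition \ref{involution_extends_to_symmetric}(ii).

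For part (i), I would use that the filters of dense left and dense right ideals are conjugated (noted in the paragraph preceding the corollary), with corresponding symmetric ring of quotients equal to $\Qmax(R)$, and invoke Proposition \ref{involution_extends_to_symmetric}(i) to extend the involution. For $\Qtot(R),$ which is by construction a perfect symmetric ring of quotients with injective localization map from $R$, Lemma \ref{symmetric_perfect_have_involution} applies directly.

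For parts (ii) and (iii) I would argue symmetrically, assuming without loss of generality that the involution extends to the right-sided ring. In (ii), conjugacy of the dense filters is already in hand, so Proposition \ref{right_is_symmetric} gives $\Qrmax(R) = \,_{\ef_l}R = \Qlmax(R)$, and Proposition \ref{involution_extends_to_symmetric}(ii) upgrades this to $\,_lR_r = \Qmax(R)$. In (iii), I would first produce conjugacy of the perfect filters attached to $\Qrtot$ and $\Qltot$ by applying $^*$ to the defining equation $q(J)\Qrtot(R) = \Qrtot(R)$ for ideals $J$ in the associated filter, exactly as in the proof of Lemma \ref{symmetric_perfect_have_involution}; once conjugacy is established, the same two propositions yield $\Qrtot(R) = \Qltot(R) = \Qtot(R)$.

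The main obstacle I anticipate is the very last identification in (iii): after obtaining $\,_lR_r$ via Proposition \ref{involution_extends_to_symmetric}(ii), one must recognize this symmetric ring of quotients as $\Qtot(R)$ rather than as some unspecified perfect symmetric ring of quotients of $R$. This matching relies on the canonical filter-theoretic description of $\Qtot$, $\Qrtot$, and $\Qltot$ in \cite{Lia_symmetric} and is the only point where one must look slightly beyond the propositions collected in this section.
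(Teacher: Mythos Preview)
Your proposal is correct and follows essentially the same route as the paper: the paper's ``proof'' is really the two paragraphs immediately preceding the corollary, where it records that the dense left and right filters are conjugated (so Propositions~\ref{right_is_symmetric} and~\ref{involution_extends_to_symmetric} handle the maximal case), that Lemma~\ref{symmetric_perfect_have_involution} extends the involution to $\Qtot(R)$, and that the embeddings of $R$ into $\Qrtot(R)$ and $\Qltot(R)$ allow Propositions~\ref{right_is_symmetric} and~\ref{involution_extends_to_symmetric} to be applied in the total case as well. Your plan unpacks exactly these steps, and the one subtlety you flag --- identifying the resulting symmetric ring of quotients in~(iii) with $\Qtot(R)$ via the filter description in \cite{Lia_symmetric} --- is a point the paper leaves implicit, so your caution there is warranted but does not indicate any divergence in method.
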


Let us consider Leavitt path algebras now. We have seen that the equivalent conditions (1)--(14) imply that $Q(E)=\Qlmax(L(E))=\Qltot(L(E)).$ In case that (15)--(19) hold as well, the involution extends to $Q(E).$ Thus the following holds.

\begin{corollary}
If the involution on $K$ is positive definite, then conditions (1)--(19) are equivalent with
\begin{itemize}
\item[(20)] $Q(E)$ is a symmetric ring of quotients induced by conjugated filters.

\item[(21)] $Q(E)=\Qmax(L(E)).$

\item[(22)] $Q(E)=\Qtot(L(E)).$
\end{itemize}
\label{Q(E)_as_quotient}
\end{corollary}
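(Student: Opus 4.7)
The strategy is to prove the cycle by showing that each of (20), (21), (22) is equivalent to condition (15), since (15) is already known from Theorem \ref{equivalences15-19} to be equivalent to (1)--(19) under the positive-definiteness assumption on the involution of $K$. The two ingredients I will repeatedly use are: the equality $Q(E)=\Qlmax(L(E))=\Qltot(L(E))$ already established in the course of proving Proposition \ref{necessary}, and the quotient-theoretic machinery built up in Corollary \ref{quotients_of_involutive_ring} and Propositions \ref{right_is_symmetric}--\ref{involution_extends_to_symmetric}.

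For the forward direction (1)--(19) $\Rightarrow$ (21), I will apply Corollary \ref{quotients_of_involutive_ring}(ii) with $R=L(E)$: since (15) says that $^\ast$ extends from $L(E)$ to $\Qlmax(L(E))=Q(E)$, the corollary yields $\Qrmax(L(E))=\Qlmax(L(E))=\Qmax(L(E))$, and combined with $Q(E)=\Qlmax(L(E))$ this gives $Q(E)=\Qmax(L(E))$, i.e., (21). The analogous argument using Corollary \ref{quotients_of_involutive_ring}(iii) with total rings of quotients in place of maximal ones proves (22). For (20), the remark made immediately after Lemma \ref{symmetric_perfect_have_involution} records that the classes of dense left and right ideals of any involutive ring are conjugated, so $\Qmax(L(E))$ is by definition a symmetric ring of quotients induced by a pair of conjugated filters; combining this with (21) gives (20).

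For the converse I will show that each of (20), (21), (22) independently implies (15). If (21) holds, then Corollary \ref{quotients_of_involutive_ring}(i) immediately states that $^\ast$ extends from $L(E)$ to $\Qmax(L(E))=Q(E)$, which is (15). The total-ring analogue of the same corollary handles (22) identically. If (20) holds, then by assumption $Q(E)$ is of the form $_{\ef_l}L(E)_{\ef_r}$ for some conjugated pair of left and right Gabriel filters $\ef_l, \ef_r$ of $L(E)$, putting us exactly in the hypothesis of Proposition \ref{involution_extends_to_symmetric}(i); that proposition then guarantees that $^\ast$ extends to this symmetric ring of quotients, hence to $Q(E)$, which is (15). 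Once (15) is in hand in each of the three cases, Theorem \ref{equivalences15-19} gives back all of (1)--(19) and closes the loop.

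There is no serious obstacle here; the argument is essentially an assembly of pieces already assembled in this section, the crucial structural input being the identifications $Q(E)=\Qlmax(L(E))=\Qltot(L(E))$ from the proof of Proposition \ref{necessary}. The only point that warrants a moment's bookkeeping is keeping straight which clause of Corollary \ref{quotients_of_involutive_ring} is invoked at each step (maximal versus total, and the existence clause (i) versus the collapse clauses (ii) and (iii)), but this is a matter of matching hypotheses rather than genuine mathematical difficulty.
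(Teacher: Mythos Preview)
Your proof is correct and follows essentially the same approach as the paper: both establish the equivalence of (20)--(22) with (15) using Proposition~\ref{involution_extends_to_symmetric} and Corollary~\ref{quotients_of_involutive_ring}, with the identifications $Q(E)=\Qltot(L(E))$ (always) and $Q(E)=\Qlmax(L(E))$ (under (1)) supplying the bridge. One small expositional point: the equality $Q(E)=\Qlmax(L(E))$ from Proposition~\ref{necessary} is conditional on (1), not unconditional, so it is only available in your forward direction where (1)--(19) are assumed---which is indeed the only place you use it, so the logic is sound.
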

\begin{proof}
Any of (20), (21) or (22) implies (15) by Proposition \ref{involution_extends_to_symmetric}.
Conversely, (15) implies (21) and (22) by Corollary \ref{quotients_of_involutive_ring}.  Finally, (21) or (22) trivially imply (20).
\end{proof}

If $E$ is a finite graph, $Q(E)$ is the total left ring of quotients of $L(E)$ even if it may not be equal to $\Qlmax(L(E))$ in case that $L(E)$ is not noetherian. However, since $L(E)$ is hereditary (thus semihereditary as well), the description of the total left ring of quotients from \cite[Theorem 12]{Lia_quotients} can be used to describe $Q(E)$ via $\Qlmax(L(E)).$ Thus,
\[Q(E)=\{q\in \Qlmax(L(E))\ |\ Iq\subseteq R\mbox{ for a left ideal }I\mbox{ with }\Qlmax(L(E))I=\Qlmax(L(E))\}.\]

\section{Further equivalences and $K_0$-theorem}
\label{section_K_0}

Rings with involution for which the involution extends to one-sided maximal rings of quotients have been studied in the past (e.g. \cite{Handelman}, \cite{Pyle}). For Leavitt path algebras with (1)--(22), the involution can be extended to the left maximal rings of quotients. This fact implies some further properties that we discuss in this section. In the following proposition and its proof, $\Qrmax$ stands for $\Qrmax(L(E)),$ $\Qlmax$ stands for $\Qlmax(L(E))$ and similar abbreviations are used for the total and classical rings of quotients of $L(E)$ as well.

\begin{proposition} If $E$ is a finite graph and $K$ a field with positive definite involution, then the following conditions are equivalent with (1)--(22).
\begin{itemize}
\item[(23)] $\Qrmax=\Qltot.$

\item[(24)] Every finitely generated nonsingular $L(E)$-module is projective.

\item[(25)] $\Qrmax\otimes_{L(E)}\Qrmax\cong\Qrmax$ via $q\otimes s\mapsto qs$ and $\Qrmax$ is flat as a right and left $R$-module.

\item[(26)] $\Qrmax=\Qltot=\Qrtot.$

\item[(27)] $M_n(L(E))$ is right strongly Baer (i.e. every complemented right ideal is generated by an idempotent) for every $n.$

\item[(28)] Every finitely generated $L(E)$-module $M$ is such that $M\otimes_{L(E)} Q(E)= M\otimes_{L(E)} \Qrmax.$

\item[(29)] $\Qrmax=\Qlcl.$
\end{itemize}
Moreover, these conditions imply that $M\otimes_{L(E)}\Qrmax$ is isomorphic to the injective envelope $E(M)$ for any nonsingular right $L(E)$-module $M.$ Also, the following rings of quotients are all equal to $Q(E)$
$\Qrmax=\Qlmax=\Qmax=\Qrtot=\Qltot=\Qtot=\Qrcl=\Qlcl.$
\label{extendible_LPAs}
\end{proposition}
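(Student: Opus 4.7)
The plan is to work in two phases: first establish that (1)--(22) imply each of (23)--(29) together with the concluding identifications, and then show each of (23)--(29) in turn forces some already-established condition in the list (typically (12) or (13)).

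For the forward direction, I would use the fact that under (1)--(22) the involution of $L(E)$ extends to $Q(E) = \Qlmax(L(E)) = \Qltot(L(E))$ (from (15) and (18)), so Corollary \ref{quotients_of_involutive_ring}(ii), (iii) applied to the extended involution yields $\Qrmax = \Qlmax = \Qmax$ and $\Qrtot = \Qltot = \Qtot$, all equal to $Q(E)$. The matrix decomposition from Theorem \ref{noetherian_NE_theorem}(5) has classical two-sided ring of quotients $\bigoplus_{i=1}^l M_{m_i}(K(x)) \oplus \bigoplus_{j=1}^k M_{n_j}(K)$, which $\phi$ identifies with $Q(E)$; hence $\Qlcl = \Qrcl = Q(E)$. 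These identifications yield (23), (26), (29), and the final displayed chain of equalities directly. Conditions (25) and (28) then follow because $Q(E) = \Qrmax$ is a perfect two-sided localization of $L(E)$: the multiplication map on $\Qrmax \otimes_{L(E)} \Qrmax$ is an isomorphism, and $M \otimes_{L(E)} \Qrmax = M \otimes_{L(E)} Q(E)$ on every module. Condition (24) reduces via the matrix decomposition to finitely generated torsion-free modules over matrix rings over principal ideal domains, which are projective; condition (27) holds because $M_n(L(E))$ is Morita equivalent to $L(E)$ and hence inherits the Baer, Rickart, and finite-uniform-dimension properties, which together force every complemented right ideal to be generated by an idempotent.

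For the converse, the strategy is to derive (12) or (13) from each of (23)--(29). Since $L(E)$ is right nonsingular, $\Qrmax$ is always right self-injective and regular; since $Q(E)$ is hereditary by \cite[Theorem 4.2]{AB}, whenever one can show $\Qrmax = Q(E)$, one obtains that $Q(E)$ is a right self-injective right hereditary regular ring, hence semisimple by \cite[Theorem 7.52]{Lam}, giving (12). Conditions (23), (26), and (28) applied with $M = L(E)$ all directly yield $\Qrmax = Q(E)$, as does (25) via the observation that a flat bimodule epimorphic extension coincides with both total one-sided rings of quotients. Condition (24) implies by a Sandomierski-type characterization that $\Qrmax$ is a perfect left localization, so $\Qrmax \subseteq \Qltot = Q(E)$, reducing to the previous case. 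Condition (29) realizes $\Qrmax$ as a classical left ring of quotients, so $L(E)$ is left Goldie and thus has finite uniform dimension, which is (13). Condition (27) reduces to (24) via the classical correspondence between strong Baerness of all matrix rings and projectivity of all finitely generated nonsingular right modules.

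Finally, the concluding isomorphism $M \otimes_{L(E)} \Qrmax \cong E(M)$ for nonsingular right $M$ follows from the fact that, under the equivalent conditions, $L(E) \subseteq \Qrmax = Q(E)$ is a perfect right localization with $\Qrmax$ semisimple: tensoring with $\Qrmax$ is exact, produces $\Qrmax$-modules (hence $L(E)$-injective), and preserves essentiality for nonsingular modules. The hardest part of this plan is choosing the cleanest path through the reverse implications, particularly for (25) and (27), where the precise classical characterizations (perfect two-sided localizations, strongly Baer matrix rings) need to be invoked rather than re-derived; the remaining work reduces to bookkeeping around the already-established equivalences and the matrix decomposition from Theorem \ref{noetherian_NE_theorem}(5).
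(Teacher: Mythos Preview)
Your overall architecture---establishing all of (23)--(29) from (1)--(22) via the matrix decomposition, and then pulling each of (23)--(29) back to (12) or (13)---is genuinely different from the paper's route. The paper instead proves the cycle (23)$\Rightarrow$(24)$\Rightarrow$(25)$\Rightarrow$(26)$\Rightarrow$(23) directly (using Stenstr\"om's \cite[Cor.~7.4, p.~259]{Stenstrom} twice and Lemma~\ref{symmetric_perfect_have_involution} once), then links the cycle to (22); (27) is tied to (25) via Evans' theorem \cite[Theorem 2.4]{Evans_all}, (28) to (23) trivially, and (29) to (24) via perfectness of $\Qlcl$. Your key backward idea---that once $\Qrmax=Q(E)$, the right self-injectivity of $\Qrmax$ together with the hereditariness of $Q(E)$ forces semisimplicity and hence (12)---is clean and is not used in the paper; it handles (23), (26), (28) immediately.

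There are, however, two genuine gaps. First, for (24) and (25) you only obtain $\Qrmax\subseteq\Qltot=Q(E)$: being a perfect \emph{left} localization places $\Qrmax$ inside $\Qltot$, but nothing in your argument gives the reverse inclusion, so ``reducing to the previous case'' does not go through. The missing step is precisely the involution argument the paper uses for (25)$\Rightarrow$(26): condition (25) makes $\Qrmax$ a perfect \emph{symmetric} ring of quotients, so Lemma~\ref{symmetric_perfect_have_involution} extends the involution to $\Qrmax$, and then Corollary~\ref{quotients_of_involutive_ring}(ii) gives $\Qrmax=\Qlmax\supseteq\Qltot=Q(E)$. Once you insert this, your semisimple argument closes. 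Second, your (29)$\Rightarrow$(13) is unjustified: the equality $\Qrmax=\Qlcl$ only tells you $\Qlcl$ exists, i.e.\ $L(E)$ is left Ore, which is far weaker than left Goldie (you would still need finite left uniform dimension, the very thing you are trying to conclude). The paper's route is safer: $\Qlcl$ is always a perfect left ring of quotients, so (29) makes $\Qrmax$ perfect left, and Stenstr\"om's characterization yields (24); from there you feed back into the corrected argument above.

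Your forward direction via the matrix decomposition is correct and more explicit than the paper's abstract treatment, though the paper's cycle is more economical. For (27), both you and the paper ultimately rely on Evans' equivalence, so there is no real divergence there.
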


\begin{proof}
First we prove (23) $\Rightarrow$ (24) $\Rightarrow$ (25) $\Rightarrow$ (26) $\Rightarrow$ (23). Then we show that these conditions are equivalent with (1)--(22). Finally, we demonstrate that (27)--(29) are equivalent to the rest.

Condition (23) implies that $\Qrmax$ is a perfect left ring of quotients of $L(E).$ Since $L(E)$ is (right) semihereditary, condition (24) follows by \cite[Corollary 7.4, p.259]{Stenstrom}.

Also by \cite[Corollary 7.4, p.259]{Stenstrom} (see the last sentence of it), (24) implies that $\Qrmax$ both a perfect left and a perfect right ring of quotients. Since such quotients have property (25) (see \cite[Theorem 4.1]{Lia_symmetric}), (24) implies (25).

Condition (25) implies that $\Qrmax$ is a perfect left and a perfect right ring of quotients. In that case, it is also a perfect symmetric ring of quotients. By Lemma \ref{symmetric_perfect_have_involution}, the involution extends to $\Qrmax.$ Hence $\Qrmax=\Qlmax=\Qmax$ by Corollary \ref{quotients_of_involutive_ring}. Since (25) also implies that $\Qrmax\subseteq\Qrtot$ and $\Qrmax\subseteq\Qltot,$ the equalities follow since $\Qrmax\subseteq\Qrtot\subseteq\Qrmax$ and $\Qrmax\subseteq\Qltot\subseteq\Qlmax=\Qrmax.$ So, (26) holds.

Condition (26) trivially implies (23).

Condition (26) implies (22). This is because $\Qltot=\Qrtot$ implies that $\Qltot=\Qrtot=\Qtot$ and this implies (22) because $Q(E)$ is always equal to $\Qltot.$ Conversely, if (22) holds, then (21) and (18) hold as well and so $\Qtot$ is equal to $\Qltot=\Qlmax,$ thus $\Qrtot$ as well, and $\Qtot$ is also equal to $\Qmax$ and $\Qrmax$ by part (ii) of Corollary \ref{quotients_of_involutive_ring}. Thus (26) holds.

\cite[Theorem 2.4]{Evans_all} states that (27) is equivalent to the statement that $L(E)$ is right semihereditary with $\Qrmax$ that is a perfect left and a perfect right ring of quotients. Since this last condition is precisely (25) and $L(E)$ is indeed right semihereditary, (25) and (27) are equivalent.

(23) $\Leftrightarrow$ (28). From $Q(E)=\Qltot,$ we have that (23) implies (28). Taking $M=L(E)$ in (28), we obtain (23).

Condition (13) implies that $L(E)$ is a semiprime left and right Goldie ring. This is because $L(E)$ is semiprime, left and right nonsingular and has finite uniform dimension if we assume (13), so it is a semiprime two-sided Goldie ring (\cite[Theorem 11.13]{Lam}). Thus, it is left and right Ore with $\Qlmax=\Qlcl$ and $\Qrmax=\Qrcl$ (\cite[Corollary 13.15]{Lam}). A left and right Ore ring has $\Qrcl=\Qlcl$  (\cite[Remark 10.17]{Lam}). Thus $\Qrmax=\Qlcl$ and (29) follows. Conversely, we show that (29) implies (24). Since $\Qlcl$ is a perfect left ring of quotients (\cite[Example p.230]{Stenstrom}), (29) implies that $\Qrmax$ is a perfect left ring of quotients. Since $L(E)$ is right semihereditary, (24) follows by \cite[Corollary 7.4]{Stenstrom}.

The last sentence of the proposition holds by \cite[Corollary 2.8, p. 248]{Stenstrom}.
\end{proof}

\begin{remark}
Note that the algebra $M_n(L(E))$ is isomorphic to the Leavitt path algebra
$L(M_n E)$ where $M_nE$ is the graph obtained by adding the oriented line of
length $n-1$ to every vertex of $E$ (see \cite[Definition 9.1 and Proposition
9.3]{Abrams_Tomforde}). Note that this gives us $M_n(Q(E))\cong Q(M_nE)$ as
well. This follows from the fact that $Q(E)=\Qltot(L(E))$ and that
$M_n(\Qltot(R))\cong \Qltot(M_n(R))$  for any ring $R$.
\end{remark}

We use Proposition \ref{extendible_LPAs} to obtain a specific description of the
inverse of the isomorphism $V(L(E))\rightarrow V(Q(E))$ of the monoids of
equivalence classes of finitely generated projectives over $L(E)$ and $Q(E).$ In
\cite[Theorem 3.5]{AMP}, it is shown that there is natural isomorphism between
$V(L(E))$ and an abelian monoid $M_E$ defined via the generators $\{a_v\ |\ v\in
E^0\}$ and subject to relations
\begin{center}
$a_v=\sum a_{r(e)}$ for all $e\in E^1$ with $v=s(e)$ and every $v\in E^0$ that emits edges
\end{center} if $E$ is row-finite. \cite[Theorem 3.1]{AB} proves that there is a canonical isomorphism between $M_E$ and $V(Q(E)).$ In particular, from the proof it follows that the isomorphism $\varphi: V(L(E))\rightarrow V(Q(E))$ is induced by the map  $P\mapsto P\otimes_{L(E)}Q(E).$ Our goal is to prove that the inverse of the isomorphism $\varphi$ is induced by $P\mapsto P\cap L(E)^n$ if $P$ is a finitely generated projective $Q(E)$-module that can be embedded in $Q(E)^n.$

The relation between $L(E)$ and $Q(E)$ in certain ways parallels the one between a finite von Neumann algebra (or a Baer $^\ast$-ring satisfying axioms as in \cite{Lia_dimension}) and its algebra of affiliated operators (or the regular ring of a Baer $^\ast$-ring). So, the proof of Lemma \ref{complements=summands} in what follows parallels that of \cite[Lemma 8]{Lia_dimension}. Also, the proof of Theorem \ref{K0_theorem} parallels the proof of \cite[Corollary 25]{Lia_dimension}. First, let us recall a few preliminary facts. Let $R$ be any ring and $M$ a nonsingular $R$-module. By \cite[Corollary 7.44']{Lam}, there is a one-to-one correspondence between closed submodules of $M$ and those of $E(M)$ (for the definition of closed submodules, see \cite[Definition 7.31]{Lam}). By the remark after the proof of \cite[Corollary 7.44']{Lam}, the closed submodules of $E(M)$ are precisely the direct summands of $E(M).$ Moreover, if $N$ is a closed module of $M$, then its closure in $E(M)$ is a copy of the injective envelope $E(N).$ By \cite[Proposition 7.44]{Lam}, a submodule $N$ of $M$ is closed if and only if $N$ is a complement in $M$. This gives us a one-to-one correspondence
 \[\{\mbox{complements in }M\} \longleftrightarrow\{\mbox{direct summands of }E(M)\}\] given by
$N \mapsto$ the closure of $N$ in $E(M)$ that is equal to a copy of $E(N).$ The inverse map is given by $L\mapsto L\cap M$ (see the proof of \cite[Corollary 7.44']{Lam}). This gives us the following lemma.

\begin{lemma} Let $K$ be a field with positive definite involution, $L(E)$ be a noetherian Leavitt path algebra and let $P$ be a right $L(E)$-submodule of $L(E)^n$ for some non-negative $n$.
The following conditions are equivalent:
\begin{enumerate}
\item[(i)] $P$ is a complement in $L(E)^n.$

\item[(ii)] $P$ is closed in $L(E)^n.$

\item[(iii)] There is an idempotent $p\in M_n(L(E))$ such that $P=$ im $p.$

\item[(iv)] $P$ is a direct summand of $L(E)^n.$
\end{enumerate}
\label{complements=summands}
\end{lemma}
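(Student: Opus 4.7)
The plan is to close the equivalence via the cycle (i) $\Leftrightarrow$ (ii) $\Rightarrow$ (iv) $\Leftrightarrow$ (iii) $\Rightarrow$ (i). Three of the four arrows are essentially free. The equivalence (i) $\Leftrightarrow$ (ii) is exactly the characterization of closed submodules as complements quoted from \cite[Proposition 7.44]{Lam} immediately before the statement. The equivalence (iii) $\Leftrightarrow$ (iv) is the standard correspondence between direct summands of a finitely generated free module and idempotents in its endomorphism ring, together with the identification $\mathrm{End}_{L(E)}(L(E)^n) \cong M_n(L(E))$. And (iv) $\Rightarrow$ (i) is immediate, since any direct summand is a complement of its complementary summand.

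The substantive step is therefore (ii) $\Rightarrow$ (iv), and I would handle it by activating condition (24) of Proposition \ref{extendible_LPAs}. Under the blanket hypothesis of the lemma (namely $L(E)$ noetherian and the involution on $K$ positive definite), all of the equivalences (1)--(29) from Theorems \ref{equivalences1-14}, \ref{equivalences15-19} and Proposition \ref{extendible_LPAs} are in force, so in particular every finitely generated nonsingular right $L(E)$-module is projective. Since $L(E)$ is right nonsingular (recalled in Section \ref{section_recap}), so is $L(E)^n$, and a closed submodule of a nonsingular module has nonsingular quotient; hence $L(E)^n / P$ is nonsingular. Noetherianness of $L(E)$ makes $L(E)^n / P$ finitely generated, and condition (24) then promotes it to projective. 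The short exact sequence
$$0 \to P \to L(E)^n \to L(E)^n / P \to 0$$
splits, so $P$ is a direct summand of $L(E)^n$, which is (iv).

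The only non-trivial conceptual move is the recognition that ``$P$ closed in $L(E)^n$'' translates, in the nonsingular setting, into ``$L(E)^n / P$ is nonsingular'', which is the precise hypothesis needed to deploy (24). Once that bridge is crossed, noetherianness plus (24) finishes the argument without any matrix-level or Leavitt-path-specific computation, so I expect this translation to be the main (and really only) obstacle. It is worth noting that this proof does not make use of $Q(E)$ directly; the role played by the regular algebra in the analogous results for von Neumann algebras (cf.\ \cite[Lemma 8]{Lia_dimension}) is absorbed here into the clean splitting statement (24), which itself was established earlier through the $Q(E) = \Qrmax(L(E))$ circle of equivalences.
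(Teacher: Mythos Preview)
Your proof is correct and matches the paper's argument essentially step for step: the paper also treats (i)$\Leftrightarrow$(ii) and (iii)$\Leftrightarrow$(iv) as immediate, notes (iv)$\Rightarrow$(i) trivially, and proves (ii)$\Rightarrow$(iv) by observing that $L(E)^n/P$ is nonsingular (citing \cite[Theorem 7.28]{Lam}) and finitely generated, hence projective by condition (24), so the sequence splits. One tiny remark: you do not need noetherianness to conclude that $L(E)^n/P$ is finitely generated, since any quotient of the finitely generated module $L(E)^n$ is automatically finitely generated.
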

\begin{proof} Since $L(E)$ is a nonsingular ring (\cite[Proposition 4.1]{Mercedes_quotients}), $L(E)^n$ is a nonsingular module. Thus, (i) and (ii) are equivalent as noted in the paragraph preceding the lemma. It is also clear that (iii) and (iv) are equivalent.

Since a direct summand is a complement, (iv) implies (i). Finally, we show that (ii) implies (iv). Let $P$ be a closed submodule of  $L(E)^n$. Then $L(E)^n/P$ is nonsingular by \cite[Theorem 7.28]{Lam}. Thus, $L(E)^n/P$ is finitely generated nonsingular module. Then  $L(E)^n/P$ is projective by condition (24) of Proposition \ref{extendible_LPAs}. So, the embedding of $P$ into $L(E)^n$ splits and thus $P$ is a direct summand of $L(E)^n$.
\end{proof}

Now we can prove the following theorem.

\begin{theorem} Let $K$ be a field with positive definite involution and $L(E)$ a noetherian Leavitt path algebra.
\begin{enumerate}
\item[(i)] For every finitely generated nonsingular (equivalently projective) $L(E)$-module $P$, there is
a one-to-one correspondence \[\{\mbox{direct summands of }P\}
\longleftrightarrow\{\mbox{direct summands of }E(P)=P\otimes_{L(E)}Q(E)\}\] given by
$N \mapsto$ $N\otimes_{L(E)}Q(E) = E(N).$ The inverse map is given by
$L\mapsto L\cap P.$

\item[(ii)] The isomorphism $\varphi: V(L(E))\rightarrow V(Q(E))$ induced by the map  $P\mapsto P\otimes_{L(E)}Q(E)$ has the inverse induced by $P\mapsto P\cap L(E)^n$ if $P$ is a finitely generated projective $Q(E)$-module that can be embedded in $Q(E)^n.$
\end{enumerate}
\label{K0_theorem}
\end{theorem}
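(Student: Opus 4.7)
The plan is to establish (i) via the classical bijection between closed submodules of a nonsingular module and direct summands of its injective envelope, and then to derive (ii) by specializing (i) to $P=L(E)^n$ and transferring the resulting correspondence across the monoid isomorphism $\varphi$.

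For part (i), I would proceed as follows. A finitely generated projective $L(E)$-module $P$ embeds as a direct summand of some $L(E)^n$, and since $L(E)$ is nonsingular (as recalled in Section~\ref{section_recap}) the module $P$ is nonsingular as well. I next argue that the direct summands of $P$ coincide with its closed submodules: every direct summand is trivially closed, and conversely if $N\subseteq P$ is closed then $P/N$ is nonsingular and finitely generated, hence projective by condition (24) of Proposition~\ref{extendible_LPAs}, so the short exact sequence $0\to N\to P\to P/N\to 0$ splits. Combining condition (28) of Proposition~\ref{extendible_LPAs} with the concluding assertion of that proposition yields an isomorphism $E(P)\cong P\otimes_{L(E)}Q(E)$ for nonsingular $P$. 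Under this identification, the correspondence recalled immediately before Lemma~\ref{complements=summands} (closed submodules of $P$ versus direct summands of $E(P)$, sending $N$ to its closure in $E(P)$, which is a copy of $E(N)$, with inverse $L\mapsto L\cap P$) delivers precisely the bijection claimed in (i).

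For part (ii), I specialize (i) to $P=L(E)^n$, noting that $E(L(E)^n)=L(E)^n\otimes_{L(E)}Q(E)=Q(E)^n$. Every finitely generated projective $Q(E)$-module $M$ is isomorphic to a direct summand $L$ of some $Q(E)^n$, and part (i) then gives $L=N\otimes_{L(E)}Q(E)$ with $N=L\cap L(E)^n$ a direct summand of $L(E)^n$; in particular $N$ is a finitely generated projective $L(E)$-module and $\varphi([N])=[L]=[M]$. Thus the assignment $[M]\mapsto[L\cap L(E)^n]$ is a left inverse to $\varphi$ on classes representable by direct summands of $Q(E)^n$, and hence on all of $V(Q(E))$. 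Independence from the choice of embedding follows from the injectivity of $\varphi$: if $L_1\subseteq Q(E)^{n_1}$ and $L_2\subseteq Q(E)^{n_2}$ both represent $[M]$, then $\varphi([L_1\cap L(E)^{n_1}])=[M]=\varphi([L_2\cap L(E)^{n_2}])$, so the two intersections determine the same class in $V(L(E))$. Since $\varphi$ is already known to be an isomorphism, the map $P\mapsto P\cap L(E)^n$ is therefore exactly $\varphi^{-1}$.

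The main obstacle I foresee is a mild mismatch of hypotheses: Lemma~\ref{complements=summands} is stated only for submodules of $L(E)^n$, whereas part (i) requires the equivalence of ``closed submodule'' and ``direct summand'' for an arbitrary finitely generated projective $P$. This is genuinely the only new technical point beyond the equivalences already collected, and it is resolved verbatim by the argument used in Lemma~\ref{complements=summands}, since a quotient of a finitely generated nonsingular module by a closed submodule is finitely generated and nonsingular, whereupon condition (24) of Proposition~\ref{extendible_LPAs} forces projectivity and thus splitting. Apart from this check, the proof is a formal manipulation of the equivalences and correspondences established in Sections~\ref{section_charac_noeth} and~\ref{section_symmetric}.
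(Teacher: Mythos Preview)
Your proposal is correct and follows essentially the same route as the paper: it combines the closed-submodule/direct-summand correspondence recalled before Lemma~\ref{complements=summands}, the identification $E(P)\cong P\otimes_{L(E)}Q(E)$ from Proposition~\ref{extendible_LPAs}, and the splitting argument via condition (24). Your write-up is in fact more careful than the paper's, since you explicitly extend the ``closed $=$ summand'' equivalence from $L(E)^n$ to an arbitrary finitely generated projective $P$ and verify well-definedness of the inverse in (ii), points the paper leaves implicit.
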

\begin{proof}
(i) follows directly from Lemma \ref{complements=summands}, the paragraph preceding it and the fact that $P\otimes_{L(E)}Q(E)$ is the injective envelope of $P$ for a nonsingular $P$ (see the last sentence of Proposition \ref{extendible_LPAs}). (ii) follows directly from (i).
\end{proof}

\section{Isomorphism conjecture}
\label{section_isomorphism_conjecture}

A well-known theorem by Gardner \cite[Theorem B]{Gardner} shows that if two $C^*$-algebras are isomorphic as algebras, then they are isomorphic as $^\ast$-algebras. With this in mind, in \cite[p. 20]{Abrams_Tomforde}, G. Abrams and M. Tomforde posed several isomorphism conjectures.

\begin{C1}{\rm If  $L_{\Cset}(E)\cong L_{\Cset}(F)$ as algebras, then $L_{\Cset}(E)\cong L_{\Cset}(F)$
as $^\ast$-algebras.}
\end{C1}

\begin{C2}{\rm If $L_{\Cset}(E)\cong L_{\Cset}(F)$ as algebras, then $C^*(E)\cong C^*(F)$ as $^\ast$-algebras.}
\end{C2}

\begin{IC}{\rm If
$L_{\Cset}(E)\cong L_{\Cset}(F)$ as rings, then $C^*(E)\cong C^*(F)$
as $^\ast$-algebras.}
\end{IC}

It is known (see \cite[Corollary 4.5]{Abrams_Tomforde}) that if $L_{\Cset}(E)\cong L_{\Cset}(F)$ as $^\ast$-algebras, then $C^*(E)\cong C^*(F)$ as $^\ast$-algebras. Thus, (C1) implies (C2).

For these conjectures, it is assumed that the involution on $\Cset$ is fixed to be the complex-conjugate involution \cite[Definition 2.1]{Abrams_Tomforde}. If the involution in (C1) is not fixed, the conjecture trivially fails as the next example shows.

\begin{example}
Let $id$ denote the identity involution on $\Cset$ and let $\hskip.1cm^{\overline{\hskip0.2cm}}$ denote the complex-conjugate involution. Since $(\Cset, id)\cong(\Cset,  \hskip.1cm^{\overline{\hskip0.2cm}})$ as fields, $L_{(\Cset, id)}(E)\cong L_{(\Cset,  \hskip.1cm^{\overline{\hskip0.2cm}})}(E)$ as algebras. However, $L_{(\Cset, id)}(E)$ and $L_{(\Cset,  \hskip.1cm^{\overline{\hskip0.2cm}})}(E)$ are not isomorphic as $^\ast$-algebras: if $f$ is a $^\ast$-isomorphism then $i=if(1)=f(i)=f(i^*)=f(i)^*=(f(1)i)^* = i^*f(1)^*=-if(1)=-i$. A contradiction.
\end{example}

Working with general Leavitt path algebras does not imply any specific ties to the field of complex numbers equipped with the complex-conjugated involution. Thus, we generalize (C1) to a conjecture we call the Generalized Isomorphism Conjecture. We also add a stronger version of it -- the Generalized Strong Isomorphism Conjecture.

\begin{GIC}{\rm Let $K$ be a field with (fixed) involution. If  $L_{K}(E)\cong L_{K}(F)$ as algebras, then $L_{K}(E)\cong L_{K}(F)$ as $^\ast$-algebras.}
\end{GIC}

\begin{GSIC}{\rm Let $K$ be a field with (fixed) involution. If  $L_{K}(E)\cong L_{K}(F)$ as rings, then $L_{K}(E)\cong L_{K}(F)$ as $^\ast$-algebras.}
\end{GSIC}

Note that (GIC) implies (C1) and (C2) and (GSIC) implies (IC) (by \cite[Corollary 4.5]{Abrams_Tomforde}) and (GIC).

As we have seen in the example above, (GIC) and (GSIC) fail if the involution on $K$ is not fixed. Also, it is easy to see that a fixed isomorphism of Leavitt path algebras does not have to be a $^\ast$-isomorphism.

\begin{example}
Let $K$ be any field of characteristic different from 2 with any involution. Consider the Leavitt path algebra $L_K(E)$ over the 2-line graph.
$$\xymatrix{ ^{v}{\bullet} \ar[r]^e  &{\bullet}^{w} }$$
$L_K(E)$ is $^\ast$-isomorphic to the algebra $M_2(K)$ of $2\times 2$ matrices over $K$ via the isomorphism $\phi$ of Proposition \ref{necessary}. Now consider any invertible but non-unitary matrix from $M_2(K).$ For example, we can take $A=\left[\begin{array}{cc}2 & 0\\ 0 & 1 \end{array}\right]$ since char$(K)\neq 2.$ The map $f_A(X)=AXA^{-1}$ is an automorphism of $M_2(K)$ that is not a $^\ast$-isomorphism. Then $\phi^{-1}f_A\phi$ is an automorphism of $L_K(E)$ that is not a $^\ast$-isomorphism.
\end{example}

In \cite{Abrams_Tomforde}, it is shown that (C1) and (IC) hold when $E$ and $F$ are acyclic graphs \cite[Proposition 7.4]{Abrams_Tomforde}. It is also shown that (IC) holds if $E$ and $F$ are row-finite cofinal graphs with at least one cycle and such that every cycle has an exit \cite[Proposition 8.5]{Abrams_Tomforde}.

We give a positive answer for (GSIC) (thus for (GIC) and (IC) as well) for the Leavitt path algebras of finite no-exit graphs.

\begin{theorem}\label{isomorphismconjecture}
Let $E$ and $F$ be finite no-exit graphs. The following conditions are equivalent.
\begin{enumerate}
\item[{\rm (i)}] $L(E)\cong L(F)$ (as rings).
\item[{\rm (ii)}] $L(E)\cong L(F)$ (as algebras).
\item[{\rm (iii)}] $L(E)\cong L(F)$ (as $^\ast$-algebras).
\end{enumerate}
\end{theorem}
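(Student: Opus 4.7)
The implications (iii)$\Rightarrow$(ii)$\Rightarrow$(i) are immediate, so only (i)$\Rightarrow$(iii) requires work. My plan is to normalize both sides via Theorem \ref{noetherian_NE_theorem} and Proposition \ref{necessary}(ii): these yield $^\ast$-algebra isomorphisms $L(E)\cong A_E$ and $L(F)\cong A_F$, where
\[
A_E \;=\; \Big(\bigoplus_{i=1}^{l_E} M_{m_i}(K[x,x^{-1}])\Big)\oplus\Big(\bigoplus_{j=1}^{k_E} M_{n_j}(K)\Big),
\]
and similarly for $A_F$ with data $l_F,k_F,\{m_i'\},\{n_j'\}$, each carrying the conjugate-transpose involution combined with $x\mapsto x^{-1}$ on $K[x,x^{-1}]$. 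Any two such direct sums that share the same numerical data (up to reordering of summands) are literally identical as $^\ast$-algebras, so the theorem reduces to the following: given a ring isomorphism $L(E)\cong L(F)$, which transports to a ring isomorphism $\Phi\colon A_E\to A_F$, deduce $l_E=l_F$, $k_E=k_F$, and that the multisets $\{m_i\}$ and $\{n_j\}$ coincide with $\{m_i'\}$ and $\{n_j'\}$ respectively.

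To count the summands, I would pass to centers: $Z(A_E)=K[x,x^{-1}]^{l_E}\oplus K^{k_E}$, and similarly for $A_F$, with $\Phi$ restricting to a ring isomorphism of centers. Each summand is a commutative domain, hence indecomposable as a ring, so the decomposition of $Z(A_E)$ into primitive idempotents is unique and $\Phi$ induces a bijection between the indecomposable factors of $Z(A_E)$ and those of $Z(A_F)$. Because $K$ is a field but $K[x,x^{-1}]$ is not (they differ, for instance, in Krull dimension), the two types of factor cannot be interchanged, so $l_E=l_F$ and $k_E=k_F$; moreover, this bijection lifts to ring isomorphisms between matched blocks $M_{m_i}(R)\cong M_{m'_{\sigma(i)}}(R)$ for $R\in\{K[x,x^{-1}],K\}$ and appropriate permutations $\sigma,\tau$.

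To match the matrix sizes, I would invoke the standard rigidity statement that for a commutative ring $R$, a ring isomorphism $\Psi\colon M_m(R)\to M_{m'}(R)$ forces $m=m'$. Indeed, $\Psi$ restricts to a ring automorphism $\sigma$ of the center $R$, and endowing $M_{m'}(R)$ with the $\sigma$-twisted $R$-action $r\cdot y=\sigma(r)y$ turns $\Psi$ into an $R$-linear map between free $R$-modules of ranks $m^2$ and $(m')^2$, so IBN for the commutative ring $R$ gives $m^2=(m')^2$. Applying this for $R=K[x,x^{-1}]$ and for $R=K$ identifies all matrix sizes, so $A_E$ and $A_F$ coincide as $^\ast$-algebras up to reordering of summands, and composing the original two $^\ast$-isomorphisms then yields $L(E)\cong L(F)$ as $^\ast$-algebras. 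The main obstacle is precisely this last rigidity step: the $^\ast$-isomorphisms onto the canonical form supplied by Proposition \ref{necessary} are a priori only algebra identifications, so the matching of summands and matrix sizes must proceed entirely through invariants that survive an arbitrary, not necessarily $K$-linear, ring isomorphism.
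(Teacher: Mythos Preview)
Your argument is correct and follows essentially the same route as the paper: reduce to the canonical matrix form via Theorem~\ref{noetherian_NE_theorem} and Proposition~\ref{necessary}, then recover the numerical data from an arbitrary ring isomorphism by passing to centers and invoking matrix-size rigidity over a commutative base. The paper packages the block-matching step as an inductive peeling-off of ideals (tracking the image of $M_{m_1}(K[x,x^{-1}])\oplus 0\oplus\cdots$ and using that its center is a domain), followed by Wedderburn--Artin for the semisimple residue, and cites \cite[Exercise~14, p.~480]{Lam} for the rigidity you prove directly via the IBN/twisting argument; your central-idempotent formulation is a bit more streamlined, but the substance is the same.
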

\begin{proof}
Suppose that $L(E)\cong L(F)$ as rings. By Theorem \ref{noetherian_NE_theorem}, there exist integers $l$, $k$, $l'$, $k'$, $m_i$, $n_j$, $m'_i$, $n'_j$ such that $L(E)$ is isomorphic to $R=\left(\bigoplus_{i=1}^l M_{m_i}(K[x, x^{-1}])\right)\oplus
\left(\bigoplus_{j=1}^k M_{n_j}(K)\right)$ and $L(F)$ isomorphic to $S=\left(\bigoplus_{i=1}^{l'} M_{m'_i}(K[x, x^{-1}])\right)\oplus
\left(\bigoplus_{j=1}^{k'} M_{n'_j}(K)\right)$ as algebras. By Proposition \ref{necessary}, these isomorphisms are $^\ast$-isomorphisms. Denote the $^*$-isomorphism $L(E)\to R$ by $\phi_E$ and the $^*$-isomorphism $L(F)\to S$ by $\phi_F.$ The ring isomorphism $L(E)\cong L(F)$ induces the ring isomorphism $R\cong S.$ Denote this last isomorphism by $\Phi.$

We show that $l=l'$, $k=k'$ and the sizes of the matrices match after reordering. Thus, there is a $^\ast$-isomorphism $f$ between $R$ and $S$ and the $^\ast$-isomorphism $\phi_F^{-1}f\phi_E$ from $L(E)$ onto $L(F)$.

First assume that $l>0$ and consider the ideal $I:=M_{m_1}(K[x,x^{-1}])\oplus 0 \oplus \dots \oplus 0.$
There exist ideals $I_i$ of $K[x,x^{-1}]$ and $J_j$ of $K$ such that $\Phi(I)=\left(\bigoplus_{i=1}^{l'} M_{m'_i}(I_i)\right)\oplus\left(\bigoplus_{j=1}^{k'} M_{n'_j}(J_j)\right)$.

In particular we have the ring isomorphism $$M_{m_1}(K[x,x^{-1}])\cong \left(\bigoplus_{i=1}^{l'} M_{m'_i}(I_i)\right)\oplus \left(\bigoplus_{j=1}^{k'} M_{n'_j}(J_j)\right),$$ which yields, by taking centers, the ring isomorphism $$K[x,x^{-1}]{\rm Id}_{m_1}\cong \left(\bigoplus_{i=1}^{l'} Z(I_i) {\rm Id}_{m'_i}\right)\oplus
\left(\bigoplus_{j=1}^{k'} Z(J_j) {\rm Id}_{n'_j}\right),$$ where for any $m\in {\mathbb N}$, ${\rm Id}_m$ denotes the identity
matrix of size $m$. But since $Z(I_i)=I_i,$ $Z(J_j)=J_j$, and $K[x,x^{-1}]$ does not have zero divisors, the equation
above is only possible if all but one $I_i$ or $J_j$ are zero. Assume $I_i\neq 0$ for some $i$. Then $${\mathbb M}_{m_1}(K[x,x^{-1}])\cong I\cong
\Phi(I)=0\oplus \dots\oplus 0 \oplus {\mathbb M}_{m'_i}(I_i) \oplus 0 \dots \oplus 0\cong {\mathbb M}_{m'_i}(I_i).$$

Apply \cite[Exercise 14, p. 480]{Lam} to get that $m_1=m'_i$ and that $K[x,x^{-1}]\cong I_i$. In particular, $I_i$
is a unital ring. Since $I_i$ consists of polynomials, a degree argument shows that the only identity element possible for $I_i$ is $1\in K[x,x^{-1}]$. So $I_i$ must equal $K[x,x^{-1}]$.

Similarly we can see that if $J_j\neq 0$, then we would have that $K[x,x^{-1}]\cong J_j.$ This is impossible since $J_{j}$ is either $0$ or $K$, so this case cannot happen.

Then, we have the following equality: $$\Phi(I)=0\oplus \dots\oplus 0 \oplus
{\mathbb M}_{m_1}(K[x,x^{-1}]) \oplus 0 \dots \oplus 0.$$ So we can mod out both $I$ and $\Phi(I)$ in the isomorphism $\Phi$ to obtain an induced isomorphism in the quotients, therefore completely removing one Laurent polynomial-type matrix component on each side.

This, in particular, shows that $l'>0.$ In the same way, by using $\Phi^{-1},$ we obtain that $l'>0$ implies $l>0$ as well. Therefore we have $l>0$ if and only if $l'>0$. By following a descending process as above, this shows that $l=l'$ and that there exists a permutation $\sigma$ such that $\sigma(m_{i})=m'_i$.

Furthermore, after conveniently removing all the matrices over Laurent polynomials we are left with a ring isomorphism

$$\overline{\Phi}:\left(\bigoplus_{j=1}^k M_{n_j}(K)\right) \to \left(\bigoplus_{j=1}^{k'} M_{n'_j}(K)\right).$$

In this situation we can apply Wedderburn-Artin Theorem to readily have that $k=k'$ and that there exists a permutation $\tau$ such that $\tau(n_j)=n'_j$.

All this, together with Proposition \ref{necessary}, shows that (i) $\Rightarrow$ (iii). The implications (iii) $\Rightarrow$ (ii) $\Rightarrow$ (i) are trivial.
\end{proof}

We provide here an affirmative answer to both (GSIC) and (IC) for the class of Leavitt path algebras considered in this paper.

\begin{corollary}\label{gsic_holds} (GSIC) holds for the class of noetherian Leavitt path algebras. In particular, (IC) holds for the class of finite no-exit graphs.
\end{corollary}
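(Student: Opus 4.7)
The plan is to obtain the corollary as a nearly immediate consequence of Theorem \ref{isomorphismconjecture}, using Theorem \ref{noetherian_NE_theorem} to translate the noetherian hypothesis into the combinatorial no-exit hypothesis on the underlying graphs. The only genuinely new observation that is needed is the symmetry in the assumption: a ring isomorphism propagates the noetherian property to the other side.

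First I would fix a field $K$ with involution and suppose $L_K(E) \cong L_K(F)$ as rings, where $L_K(E)$ is noetherian. Being left (or right) noetherian is a purely ring-theoretic property, so $L_K(F)$ is noetherian as well. Applying Theorem \ref{noetherian_NE_theorem} (in particular, the equivalence of conditions (1) and (2)) to both Leavitt path algebras, I conclude that $E$ and $F$ are both finite no-exit graphs. At this point the hypotheses of Theorem \ref{isomorphismconjecture} are met, and the equivalence of its conditions (i) and (iii) immediately yields $L_K(E) \cong L_K(F)$ as $^\ast$-algebras. This establishes (GSIC) for the class of noetherian Leavitt path algebras.

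For the second assertion, I specialize to $K = \mathbb{C}$ with the complex-conjugate involution, which is positive definite. If $E$ and $F$ are finite no-exit graphs and $L_{\mathbb{C}}(E) \cong L_{\mathbb{C}}(F)$ as rings, the previous paragraph gives a $^\ast$-isomorphism $L_{\mathbb{C}}(E) \cong L_{\mathbb{C}}(F)$. Invoking \cite[Corollary 4.5]{Abrams_Tomforde} (the implication explicitly recalled in the paragraph following the statements of (C1), (C2), (IC) in Section \ref{section_isomorphism_conjecture}) then upgrades this to a $^\ast$-isomorphism of the corresponding graph $C^\ast$-algebras, $C^\ast(E) \cong C^\ast(F)$. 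This is precisely (IC) for the class of finite no-exit graphs.

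There is no serious obstacle: all of the hard work was done in assembling Theorem \ref{isomorphismconjecture}, whose proof uses Theorem \ref{noetherian_NE_theorem} to put both algebras in a normal form as a finite direct sum of matrix rings over $K$ and over $K[x,x^{-1}]$, then matches the summands using center and Wedderburn-Artin arguments, and finally upgrades an algebra isomorphism to a $^\ast$-isomorphism through Proposition \ref{necessary}(ii). The only point that warrants a moment of care in the corollary itself is remembering that the noetherian hypothesis on one side transfers to the other side under a mere ring isomorphism, so that Theorem \ref{isomorphismconjecture} can legitimately be invoked.
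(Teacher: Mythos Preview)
Your proposal is correct and follows essentially the same route as the paper: invoke Theorem \ref{isomorphismconjecture} to pass from a ring isomorphism to a $^\ast$-algebra isomorphism, and then apply \cite[Corollary 4.5]{Abrams_Tomforde} to obtain (IC). The paper's own proof is a one-liner doing exactly this; you simply spell out the intermediate step (transferring the noetherian property across the ring isomorphism and then invoking the graph-theoretic characterization) that the paper leaves implicit. One small point of precision: Theorem \ref{noetherian_NE_theorem} as stated in this paper already assumes $E$ is finite, so to conclude finiteness of $E$ and $F$ from the noetherian hypothesis you should appeal to the slightly stronger statement recorded in the introduction (from \cite[Theorems 3.8 and 3.10]{AAS2}), namely that $L(E)$ is noetherian if and only if $E$ is finite and no-exit.
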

\begin{proof}
Apply Theorem \ref{isomorphismconjecture} and \cite[Corollary 4.5]{Abrams_Tomforde}.
\end{proof}

Finally, it is interesting to note the following. On one hand, the Leavitt path algebras over finite no-exit graphs satisfy (IC) by Corollary \ref{gsic_holds}. On the other hand, Leavitt path algebras over row-finite and cofinal graphs in which every cycle has an exit also satisfy (IC) by \cite[Proposition 8.5]{Abrams_Tomforde} (the assumption that there has to be at least one cycle in \cite[Proposition 8.5]{Abrams_Tomforde} can be dropped since acyclic graphs also satisfy (IC)). This gives us that the Leavitt path algebras over the finite and cofinal graphs on the two opposite sides of the spectrum (either no exits at all or exits from every cycle) both satisfy (IC). This gives reasons for hope that (IC) may hold for Leavitt path algebras of graphs in between these two extreme cases.

\end{document}